\DeclareMathOperator*{\elim}{e-lim}
\newtheorem{lemma}{Lemma}
\newtheorem{theorem}{Theorem}
\newtheorem{proposition}{Proposition}
\newtheorem{corollary}{Corollary}
\newtheorem{example}{Example}
\DeclareMathOperator{\interior}{int}
\title{\LARGE \bf
Duality Approach to Bilevel Programs with a Convex Lower Level
}
\author{Aur\'{e}lien Ouattara and Anil Aswani
\thanks{This work was supported in part by NSF Award CMMI-1450963 and the Philippine-California Advanced Research Institutes (PCARI).}
\thanks{Aur\'{e}lien Ouattara and Anil Aswani are with the Department of Industrial Engineering and Operations Research, University of California, Berkeley, CA 94720, USA 
        {\tt\small aurelien.ouattara@berkeley.edu, aaswani@berkeley.edu}}%
}
\begin{document}

\maketitle
\thispagestyle{empty}
\pagestyle{empty}

\begin{abstract}
Bilevel programs are optimization problems where some variables are solutions to optimization problems themselves, and they arise in a variety of control applications, including: control of vehicle traffic networks, inverse reinforcement learning and inverse optimization, and robust control for human-automation systems. This paper develops a duality-based approach to solving bilevel programs where the lower level problem is convex.  Our approach is to use partial dualization to construct a new dual function that is differentiable, unlike the Lagrangian dual that is only directionally differentiable.  We use our dual to define a duality-based reformulation of bilevel programs, prove equivalence of our reformulation with the original bilevel program, and then introduce regularization to ensure constraint qualification holds.  These technical results about our new dual and regularized duality-based reformulation are used to provide theoretical justification for an algorithm we construct for solving bilevel programs with a convex lower level, and we conclude by demonstrating the efficacy of our algorithm by solving two practical instances of bilevel programs.
\end{abstract}


\section{Introduction}

Bilevel programs are optimization problems in which some variables are solutions to optimization problems themselves.  Let $x\in\mathbb{R}^n$ and $y\in\mathbb{R}^m$ be vectors, and consider the following (optimistic) bilevel programming problem:
\begin{equation*}\tag*{\hbox{$\mathsf{BLP}$}}
\begin{aligned}
\min_{x,y}\ & F(x,y)\\
\text{s.t. } & G(x) \leq 0\\
& y \in \arg\min_y \{f(x,y)\ |\ g(x,y) \leq 0\}
\end{aligned}
\end{equation*}
where $F,f$ are scalar-valued and $G,g$ are vector-valued functions.  (Equality constraints $\mathfrak{G}(x) = 0$ or $\mathfrak{g}(x,y) = 0$ are included by replacement with $\mathfrak{G}(x) \leq 0$, $\mathfrak{G}(x) \geq 0$ or $\mathfrak{g}(x,y) \leq 0$, $\mathfrak{g}(x,y) \geq 0$.) If we call $x$ the upper-level decision variables and $y$ the lower-level decision variables, then $\min \{f(x,y)\ |\ g(x,y) \leq 0\}$ is the lower level problem.  


Optimization problems with the generic form given in \textsf{BLP} are found in a variety of control applications, including control of vehicle traffic networks \cite{aswani2011,bonifaci2010,krichene2014,sharma2007,swamy2012}, inverse reinforcement learning and inverse optimization \cite{aswani2015,bertsimas2013,keshavarz2011,ng2000}, and robust control for human-automation systems \cite{vasudevan2012safe,sadigh2016planning,mintz2017behavioral}.  A solution approach for \textsf{BLP} is to replace the lower level problem by some optimality conditions and then solve the reformulated problem.  But existing algorithms \cite{anitescu2005,fukushima1999,miguel2004,outrata1990,ye1995} suffer from numerical issues \cite{kanzow2014,lin2005,scholtes2001,steffensen2010}, and so the development of new algorithms to solve \textsf{BLP} is an important area for research.

\subsection{Existing Solution Approaches}

One method \cite{anitescu2005,fukushima1999,miguel2004} for solving \textsf{BLP} replaces the lower level problem with its \textsf{KKT} conditions, giving a mathematical program with equilibrium constraints (\textsf{MPEC}). The advantage of this approach is the reformulated problem can be solved using standard nonlinear optimization software.  However, it uses complimentarity constraints, which implies a combinatorial nature to the reformulated optimization problem and leads to numerical difficulties \cite{kanzow2014,lin2005,scholtes2001,steffensen2010}.



Another method \cite{outrata1990,ye1995} for solving \textsf{BLP} replaces the lower level problem by $f(x,y) \leq \varphi(x)$ and $g(x,y) \leq 0$, where $\varphi(x) = \min_y \{f(x,y)\ |\ g(x,y) \leq 0\}$ is the value function.  This introduces a non-differentiable constraint $f(x,y) - \varphi(x) \leq 0$ (since the value function is not differentiable), and so numerical solution needs specialized algorithms that implicitly smooth the value function \cite{lin2014}.  This precludes use of standard nonlinear optimization software. 

\subsection{Duality-Based Solution Approach}

This paper develops a duality approach to solving bilevel programs with a convex lower level.  The idea is to replace the lower level problem with $f(x,y) \leq h(\lambda,x)$, $\lambda \geq 0$, and $g(x,y) \leq 0$, where $h(\lambda,x)$ is a dual function.  Under conditions with zero duality gap, these constraints force $y$ to be a minimizer of the lower level problem.  We proposed a duality approach in a paper on inverse optimization with noisy data \cite{aswani2015}, though the prior formulation is not differentiable because of the use of Lagrangian duals.  This paper constructs an alternative dual that is differentiable.  We also study constraint qualification, which was not previously considered in \cite{aswani2015}.


Our reformulation of \textsf{BLP} is such that each term is differentiable, constraint qualification holds after regularization, and the regularization is consistent in the sense as the amount of regularization is decreased than the solution of the regularized problem approaches the solution of \textsf{BLP}.  These features allow numerical solution of our reformulation (and \textsf{BLP}) using standard nonlinear optimization software.  Most of this paper focuses on technical properties of the new dual function and of the reformulation of \textsf{BLP} using this dual, and these results are used to provide theoretical justification for the algorithm that we propose for solving \textsf{BLP}.

\subsection{Outline}


Section \ref{section:prelim} provides preliminaries, including notation and our technical assumptions about \textsf{BLP}.  Section \ref{section:cldf} defines a new dual function whose maximizers are equivalent to those of the Lagrangian dual function.  Our dual is differentiable, unlike the Lagrangian dual (which is only directionally differentiable).  We use our dual to define a duality-based reformulation (\textsf{DBP}) of \textsf{BLP} in Section \ref{section:dbref}, and the equivalence of \textsf{DBP} and \textsf{BLP} is proved.  Next, we consider constraint qualification and consistency of approximation of regularized versions of \textsf{DBP}.  In Section \ref{section:numerics}, we propose an algorithm for solving \textsf{BLP} and demonstrate its effectiveness by solving two instances of practical bilevel programs.

\section{Preliminaries}
\label{section:prelim}

We define some notation and concepts from variational analysis \cite{rockafellar2009}, and then we state our assumptions about \textsf{BLP}.  

\subsection{Notation}

Let $\|\cdot\|$ be the $\ell_2$ norm.  We use $\subseteq$,$\supseteq$ for subsets and supersets, respectively.  All functions are extended real-valued, and the set $\mathcal{C}^2$ contains all twice continuously differentiable functions. Let $C$ be a set.  Then $\interior(C)$ is the interior of $C$, and the indicator function $\delta_C(x)$ is: $\delta_C(x) = 0$ if $x\in C$, and $\delta_C(x) = +\infty$ if $x\notin C$.  $\widehat{N}_C(x)$ is the regular normal cone of $C$ at $x$, and recall $v\in\widehat{N}_C(x)$ if $\langle v,x'-x\rangle \leq o(\|x'-x\|)$ for all $x'\in C$.  The normal cone of $C$ at $x$ is $N_C(x)$, and note $v\in N_C(x)$ if there are sequences $x^\nu\rightarrow x$ with $x^\nu\in C$, and $v^\nu\rightarrow v$ with $v^\nu\in\widehat{N}_C(x^\nu)$.  The non-negative orthant $\Lambda = \{\lambda : \lambda \geq 0\}$ is a closed, convex set; and its (regular) normal cone is $N_{\Lambda}(\lambda) = \{x \leq 0 : \lambda_ix_i = 0\}$.  

Now let $f$ be a function. A vector $v$ is a regular subgradient at $\overline{x}$, written $v\in\widehat{\partial}f(\overline{x})$, if $f(x) \geq f(\overline{x}) +\langle v,x-\overline{x}\rangle + o(\|x-\overline{x}\|)$.  A vector $v$ is a subgradient at $\overline{x}$, written $v\in\partial f(\overline{x})$, if there are sequences $x^\nu\rightarrow\overline{x}$ and $v^\nu\in\widehat{\partial}f(x^\nu)$ with $v^\nu\rightarrow v$.  These are used to define constraint qualification \cite{hiriart1979}.  For a constraint set $\mathcal{C} = \{g(x)\leq 0\}$, let $\overline{x}\in\mathcal{C}$ and let $I = \{i : g_i(\overline{x}) = 0\}$ be the indices of active constraints.  This $\mathcal{C}$ satisfies linear independence constraint qualification (\textsf{LICQ}) at $\overline{x}$ if all choices of $v_i\in\partial g_i(\overline{x})$, for all $i\in I$, are linearly independent.  This $\mathcal{C}$ satisfies Mangasarian-Fromovitz constraint qualification (\textsf{MFCQ}) at $\overline{x}$ if there is a $d$ such that for all choices of $v_i\in\partial g_i(\overline{x})$, for all $i\in I$, we have $v_i^\textsf{T}d< 0$.


\subsection{Technical Results}

Our first result generalizes the boundedness theorem to set-valued mappings.  Because of the technical peculiarities of continuity for set-valued mappings, we require additional assumptions beyond continuity.

\begin{lemma}
\label{lemma:btsvm}
Let $X$ be a compact set, and consider a set-valued mapping $S(x)$ that is convex-valued, continuous, and bounded for each $x\in X$.  Then $S(X)$ is bounded.
\end{lemma}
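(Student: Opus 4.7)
The plan is to argue by contradiction using the compactness of $X$, convexity of the values of $S$, and the two halves of set-valued continuity (inner and outer semicontinuity). Suppose $S(X)$ is unbounded. Then there exist sequences $x^\nu \in X$ and $y^\nu \in S(x^\nu)$ with $\|y^\nu\| \to \infty$. By compactness of $X$, after passing to a subsequence I may assume $x^\nu \to \overline{x} \in X$. The hypothesis that $S(\overline{x})$ is bounded will be used to derive a contradiction by exhibiting an unbounded ray contained in $S(\overline{x})$.

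First, I would pick any anchor point $y_0 \in S(\overline{x})$ (nonempty since $S(\overline{x})$ is a bounded convex set arising from a continuous set-valued mapping). By inner semicontinuity of $S$ at $\overline{x}$, there exist $y_0^\nu \in S(x^\nu)$ with $y_0^\nu \to y_0$. Because $S(x^\nu)$ is convex and contains both $y_0^\nu$ and $y^\nu$, the entire segment $[y_0^\nu, y^\nu]$ lies in $S(x^\nu)$. For any fixed $t > 0$ and all $\nu$ sufficiently large (so that $t \leq \|y^\nu - y_0^\nu\|$, which holds since $\|y^\nu\| \to \infty$ while $y_0^\nu$ stays bounded), define
\begin{equation*}
w_\nu(t) \;=\; y_0^\nu + t\cdot\frac{y^\nu - y_0^\nu}{\|y^\nu - y_0^\nu\|} \;\in\; S(x^\nu).
\end{equation*}
The unit vectors $u^\nu := (y^\nu - y_0^\nu)/\|y^\nu - y_0^\nu\|$ lie in the compact unit sphere, so upon passing to a further subsequence $u^\nu \to u$ with $\|u\| = 1$. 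Hence $w_\nu(t) \to y_0 + tu$ for every fixed $t > 0$.

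Now I invoke outer semicontinuity of $S$ at $\overline{x}$: since $x^\nu \to \overline{x}$, $w_\nu(t) \in S(x^\nu)$, and $w_\nu(t) \to y_0 + tu$, we conclude $y_0 + tu \in S(\overline{x})$. Because this holds for every $t > 0$ with $\|u\| = 1$ fixed, $S(\overline{x})$ contains the entire ray $\{y_0 + tu : t \geq 0\}$, contradicting the assumed boundedness of $S(\overline{x})$.

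The main obstacle is the one I addressed with the convexity-plus-rescaling trick in the middle step: since the $y^\nu$ diverge, one cannot directly apply outer semicontinuity to them. Convexity of $S(x^\nu)$ is what lets me replace the divergent sequence by a bounded sequence $w_\nu(t)$ that still lives in $S(x^\nu)$ and whose limit carries the directional information needed to produce the ray in $S(\overline{x})$. Continuity (in the form of both inner and outer semicontinuity) is essential: inner semicontinuity supplies the anchor $y_0^\nu$ converging to $y_0$, and outer semicontinuity transfers the ray into $S(\overline{x})$.
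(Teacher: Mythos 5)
Your proposal is correct and follows essentially the same route as the paper's proof: argue by contradiction, extract the limit $\overline{x}$ by compactness, normalize the divergent sequence to obtain a limiting direction, pull an anchor point of $S(\overline{x})$ back along the sequence via inner semicontinuity, use convexity to form bounded points converging to an arbitrary point of a ray, and use outer semicontinuity to place that ray inside $S(\overline{x})$, contradicting boundedness. The only cosmetic difference is that you normalize $y^\nu - y_0^\nu$ while the paper normalizes $s^\nu$ and writes the convex combination with weights $\tau/\|s^\nu\|$; you also make explicit the appeal to outer semicontinuity that the paper leaves implicit in its ``taking the limit'' step.
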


\begin{proof}
Suppose $S(X)$ is not bounded.  Then there exist sequences $x^\nu\in X$ and $s^\nu \in S(x^\nu)$ such that $\|s^\nu\|\rightarrow\infty$.  Since $X$ is compact, there is some convergent subsequence by the Bolzano-Weierstrass theorem; and so by extracting this subsequence we can assume $x^\nu\rightarrow \overline{x}$ for some $\overline{x}\in X$.  Now consider the sequence $s^\nu/\|s^\nu\|$; note the norm of each term is 1.  Hence there is some convergent subsequence, and so by extracting this subsequence we can assume $s^\nu/\|s^\nu\|\rightarrow w$ for some $w\neq 0$.  Next choose any $t\in S(\overline{x})$, and note that by continuity of $S$ there exists $t^\nu \in S(x^\nu)$ such that $t^\nu\rightarrow t$.  For any $\tau \geq 0$, there is a $\nu$ large enough such that $\tau/\|s^\nu\| < 1$.  But $S$ is convex-valued, meaning $(1-\tau/\|s^\nu\|)\cdot t^\nu + \tau/\|s^\nu\|\cdot s^\nu \in S(x^\nu)$ for $\nu$ large enough.  Taking the limit, we have $t + \tau w \in S(\overline{x})$.  This is a contradiction since: $w \neq 0$, $\tau \geq 0$ is arbitrary, and $S$ is bounded at $\overline{x}\in X$.  Thus, we have shown by contradiction that $S(X)$ is bounded.
\end{proof}

\subsection{Assumptions} 

For the lower level problem of \textsf{BLP}, we define its value function $\varphi(x) = \textstyle\min_y\{f(x,y)\ |\ g(x,y) \leq 0\}$, solution set $s(x) = \textstyle\arg\min_y\{f(x,y)\ |\ g(x,y) \leq 0\}$, and feasible set $\phi(x) = \{y : g(x,y) \leq 0\}$.  The Lagrangian dual function (\textsf{LDF}) is $\psi(\lambda,x) = \inf_y f(x,y) + \lambda^\textsf{T} g(x,y)$.  

We also make some assumptions about \textsf{BLP}.  Not all assumptions are used in every result, but we list all of them here for conciseness.  Let $X = \{x : G(x) \leq 0\}$.   Our first set of assumptions relate to the lower level problem of \textsf{BLP}.\vspace{0.5em}

\noindent\textbf{A1. } The functions $f(x, y)$, $g(x, y)$ are convex in $y$ (for fixed $x$) and satisfy $f,g\in\mathcal{C}^2$. \vspace{0.5em}

\noindent\textbf{A2. } There exists a compact, convex set $Y$ such that $\{y : \exists x \in X \text{ s.t. } g(x,y) \leq 0\}\subseteq\interior(Y)$.\vspace{0.5em}

\noindent\textbf{R1. } For each $x \in X$, there exists $y$ such that $g(x, y) < 0$.\vspace{0.5em}

\noindent The above ensure the lower level problem and its Lagrange dual problem are solvable, meaning the minimum (maximum, respectively) is attained and the set of optimal solutions is nonempty and compact.  The pointwise \textbf{R1} ensures \textsf{BLP} has a solution under the additional assumptions below.

Our next assumptions concern \textsf{BLP}, and they ensure smoothness in the objective function of \textsf{BLP} and regularity in the constraints $G(x) \leq 0$.  These conditions, when combined with the previous conditions, ensure \textsf{BLP} has a solution.\vspace{0.5em}

\noindent\textbf{A3. } The functions $F(x,y)$, $G(x)$ are twice continuously differentiable; or equivalently that $F,G \in\mathcal{C}^2$.\vspace{0.5em}

\noindent\textbf{R2. } The set $X$ is compact and nonempty, and $G(x)$ satisfies \textsf{MFCQ} for each $x\in X$.\vspace{0.5em}


\section{Constrained Lagrangian Dual Function}

\label{section:cldf}

The numerical issue with the Lagrangian dual function (\textsf{LDF}) is that it is generally nondifferentiable in $\lambda$.

\begin{example}
\label{ex:lpdirdif}
The example of linear programming is classical: Let $A \in\mathbb{R}^{p\times m}$, $b\in\mathbb{R}^p$, $c\in\mathbb{R}^m$, and define $f(x,y) = c^\textsf{T}y$ and $g(x,y) = Ax-b$.  Then, the \textsf{LDF} is
\begin{equation}
\label{eqn:linldf}
\psi(\lambda,x) = \begin{cases} -b^\textsf{T}\lambda, & \text{if } A^\textsf{T}\lambda = -c \text{ and } \lambda \geq 0\\-\infty, &\text{otherwise}\end{cases}
\end{equation}
For $\lambda_0$ such that $A^\textsf{T}\lambda_0 =- c$ and $\lambda_0\geq 0$, this \textsf{LDF} is directionally differentiable in directions $d$ such that $A^\textsf{T}d =0$ and $\lambda_0+td\geq 0$ for $t > 0$ small enough.  However, this \textsf{LDF} is not differentiable because it is discontinuous in directions $d$ such that $A^\textsf{T}d \neq 0$ or $\lambda_0 + td \ngeq 0$ for any $t > 0$.\hfill$\blacklozenge$  
\end{example}

The nondifferentiability of the \textsf{LDF} limits its utility in reformulating bilevel programs because in general closed-form expressions for the domain of the \textsf{LDF} are not available.  In this section, we construct an alternative dual function that is designed to be differentiable while retaining the saddle point and strong duality properties of the \textsf{LDF}.

\subsection{Definition and Solution Properties}

Our approach is to perform a partial dualization.  Define the Constrained Lagrangian Dual Function (\textsf{CDF}) to be
\begin{equation}
\textstyle h(\lambda,x) = \min_y \{f(x,y) + \lambda^\textsf{T}g(x,y)\ |\ y\in Y\}.
\end{equation}
The difference as compared to the (classical) \textsf{LDF} is the domain of minimization of the Lagrangian $\mathcal{L}(x,y,\lambda) = f(x,y) + \lambda^\textsf{T}g(x,y)$.  The \textsf{LDF} is the infimum of the Lagrangian over $\mathbb{R}^m$, while the \textsf{CDF} is the minimum of the Lagrangian over a compact, convex set $Y$ that contains $\{y : \exists x \in X \text{ s.t. } g(x,y) \leq 0\}$ strictly within its interior.

An important feature of the \textsf{CDF} is it maintains the strong duality of the \textsf{LDF}, and its solutions are a saddle point to the Lagrangian $\mathcal{L}(x,y,\lambda)$.  Our first result establishes an equivalence between solutions of the \textsf{CDF} and \textsf{LDF}.  

\begin{theorem}
\label{thm:eqrel}
Suppose $\mathbf{A1},\mathbf{A2}$ and $\mathbf{R1}$ hold.  Then $\arg\max_\lambda \{\psi(\lambda,x)\ |\ \lambda\geq 0\}$ is non-empty and compact, $\max_\lambda \{h(\lambda,x)\ |\ \lambda\geq 0\} = \max_\lambda \{\psi(\lambda,x)\ |\ \lambda\geq 0\}$, and $\arg\max_\lambda \{h(\lambda,x)\ |\ \lambda\geq 0\} = \arg\max_\lambda \{\psi(\lambda,x)\ |\ \lambda\geq 0\}$
\end{theorem}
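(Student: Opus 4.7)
The plan is to establish the three claims in sequence, starting with classical convex duality for the \textsf{LDF}, then using a ``primal-shrinking'' argument to match its maximum value with that of the \textsf{CDF}, and finally leveraging the strict inclusion $\{y : g(x,y)\leq 0\}\subseteq \interior(Y)$ to match the argmax sets.

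First, to obtain non-emptiness and compactness of $\arg\max_\lambda\{\psi(\lambda,x) : \lambda\geq 0\}$, I would appeal directly to the standard theory of convex duality. Under \textbf{A1} the lower-level problem is convex with $\mathcal{C}^2$ data; under \textbf{A2} the level set $\{y:g(x,y)\leq 0\}$ is nonempty and contained in the compact set $Y$, so the primal value $\varphi(x)$ is finite and attained; and under \textbf{R1} Slater's condition holds at $x$. Together these give $\max_\lambda\{\psi(\lambda,x) : \lambda\geq 0\} = \varphi(x)$, and moreover the dual optimal set is known to be nonempty, convex, and compact whenever Slater's condition holds at a feasible point of a convex program. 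This disposes of the first assertion.

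Next, for the equality of optimal values, I would establish the two-sided inequality. The direction $\max_\lambda h \geq \max_\lambda \psi$ is trivial because $Y\subseteq \mathbb{R}^m$ gives $\psi(\lambda,x)\leq h(\lambda,x)$ for all $\lambda\geq 0$. For the reverse, I would apply weak duality to the \emph{augmented} convex primal
\begin{equation*}
\min_y \{f(x,y) : g(x,y)\leq 0,\ y\in Y\}.
\end{equation*}
Because \textbf{A2} forces $\{y:g(x,y)\leq 0\}\subseteq\interior(Y)$, the extra constraint $y\in Y$ is vacuous, so the augmented primal has the same optimal value $\varphi(x)$ as the lower-level problem. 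Weak duality for this augmented problem with Lagrangian dual $h(\lambda,x)$ then yields $\max_\lambda\{h(\lambda,x):\lambda\geq 0\}\leq \varphi(x) = \max_\lambda\{\psi(\lambda,x):\lambda\geq 0\}$, completing the equality.

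For the argmax equivalence, one inclusion is immediate: if $\lambda^\star$ maximizes $\psi$, then $\psi(\lambda^\star,x)=\varphi(x)$, and the chain $\varphi(x)=\psi(\lambda^\star,x)\leq h(\lambda^\star,x)\leq \max_\lambda h(\lambda,x)=\varphi(x)$ forces $\lambda^\star$ to maximize $h$. The reverse inclusion is the step I expect to be the main obstacle, and it is here that the interior condition from \textbf{A2} is essential. Given $\lambda^\star$ with $h(\lambda^\star,x)=\varphi(x)$, I would pick any primal optimum $\bar y$; by Slater/\textbf{R1} and the attainment from \textbf{A2}, such a $\bar y$ exists and satisfies $g(x,\bar y)\leq 0$, hence $\bar y\in\interior(Y)$. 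Chaining $\varphi(x)=h(\lambda^\star,x)\leq f(x,\bar y)+(\lambda^\star)^{\textsf{T}}g(x,\bar y)\leq f(x,\bar y)=\varphi(x)$ yields complementary slackness $(\lambda^\star)^{\textsf{T}}g(x,\bar y)=0$ and shows that $\bar y$ achieves $h(\lambda^\star,x)$. Since $\bar y$ lies in the interior of $Y$ and $f+(\lambda^\star)^{\textsf{T}}g$ is convex and $\mathcal{C}^2$ by \textbf{A1}, the first-order condition $\nabla_y(f+(\lambda^\star)^{\textsf{T}}g)(x,\bar y)=0$ holds and, by convexity, certifies $\bar y$ as an unconstrained global minimizer. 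Therefore $\psi(\lambda^\star,x)=f(x,\bar y)+(\lambda^\star)^{\textsf{T}}g(x,\bar y)=\varphi(x)$, so $\lambda^\star$ maximizes $\psi$ as well, closing the argument.
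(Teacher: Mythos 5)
Your proof is correct, but it follows a genuinely different route from the paper's. The paper associates generalized Lagrangians $l(x,y,\lambda)$ and $\ell(x,y,\lambda)=\delta_Y(y)+l(x,y,\lambda)$ to the two problems and invokes Theorem 11.50 and Corollary 11.51 of Rockafellar--Wets to transfer saddle points between them: the inclusion $\arg\max\psi\subseteq\arg\max h$ comes from $0\in N_Y(y)$, and the reverse inclusion from $N_Y(y^*)=\{0\}$ once $y^*\in\phi(x)\subseteq\interior(Y)$ forces the normal cone to be trivial. You instead argue directly: weak duality for the augmented primal (whose constraint $y\in Y$ is vacuous by \textbf{A2}) pins down $\max_\lambda h=\varphi(x)$, the pointwise bound $\psi\leq h$ gives one argmax inclusion for free, and for the harder inclusion you extract complementary slackness from a chain of equalities and then use the first-order condition $\nabla_y\mathcal{L}(x,\bar y,\lambda^*)=0$ at the \emph{interior} minimizer $\bar y$ together with convexity to certify $\bar y$ as an unconstrained global minimizer, so that $\psi(\lambda^*,x)=\varphi(x)$. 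The two proofs exploit \textbf{A2} in equivalent ways --- your ``gradient vanishes at an interior minimum'' is exactly the paper's $N_Y(y^*)=\{0\}$ --- but yours is self-contained and elementary (weak duality, Slater, and calculus), whereas the paper's buys uniformity (the same saddle-point theorem is applied to both problems) and would survive in a nonsmooth setting where your appeal to $\nabla_y\mathcal{L}=0$ would have to be replaced by a subdifferential condition. One cosmetic slip: nonemptiness of $\phi(x)$ comes from \textbf{R1}, not \textbf{A2}, though you do invoke \textbf{R1} for Slater's condition so nothing is actually missing.
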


\begin{proof}
We associate (see Example 11.46 of \cite{rockafellar2009}) the generalized Lagrangian $l(x,y,\lambda) = f(x,y) + \lambda^\textsf{T}g(x,y) - \delta_\Lambda(\lambda)$ for $\min_{y} \{f(x,y)\ |\ g(x,y) \leq 0\}$.  But $\psi(\lambda,x) = \inf_y l(x,y,\lambda)$ for $\lambda \geq 0$, and so $\partial_y l(x,y,\lambda) = \nabla_y f(x,y) + \nabla_y g(x,y)'\lambda$ and $\partial_\lambda [-l](x,y,\lambda) = -g(x,y) + N_\Lambda(\lambda)$.  Since $s(x)$ is compact and nonempty by Example 1.11 of \cite{rockafellar2009}, let $y^*\in s(x)$.  Theorem 11.50 and Corollary 11.51 of \cite{rockafellar2009} give: $\lambda^* \in \arg\max_\lambda \{\psi(\lambda,x)\ |\ \lambda\geq 0\}$ exists, $\psi(\lambda^*,x) = f(x,y^*)$, and $0 \in \partial_y l(x,y^*,\lambda^*)$ and $0 \in \partial_\lambda [-l](x,y^*,\lambda^*)$.  Next associate a generalized Lagrangian to the optimization problem $\min_{y\in Y} \{f(x,y)\ |\ g(x,y) \leq 0\}$.  From Example 11.46 of \cite{rockafellar2009}, its generalized Lagrangian is $\ell(x,y,\lambda) = \delta_Y(y) + f(x,y) + \lambda^\textsf{T}g(x,y) - \delta_\Lambda(\lambda)$.  Note $h(\lambda,x) = \min_y \{\ell(x,y,\lambda)\ |\ y\in Y\}$ for $\lambda \geq 0$, $\partial_y \ell(x,y,\lambda) = N_Y(y) + \nabla_y f(x,y) + \nabla_y g(x,y)'\lambda$, and $\partial_\lambda [-\ell](x,y,\lambda) = -g(x,y) + N_\Lambda(\lambda)$.  Since $0 \in N_Y(y)$ and $0 \in \partial_y l(x,y^*,\lambda^*)$, we have $0 \in \partial_y \ell(x,y^*,\lambda^*)$.  Similarly, $0 \in \partial_\lambda [-l](x,y^*,\lambda^*)$ yields $0 \in \partial_\lambda [-\ell](x,y^*,\lambda^*)$.  Thus, we can apply Theorem 11.50 and Corollary 11.51 of \cite{rockafellar2009}, which gives: $\lambda^* \in \arg\max_\lambda \{h(\lambda,x)\ |\ \lambda\geq 0\}$, and $h(\lambda^*,x) = f(x,y^*)$.  So $h(\lambda^*,x) = \psi(\lambda^*,x)$, proving the first part of the result.

But recall that $\lambda^* \in \arg\max_\lambda \{h(\lambda,x)\ |\ \lambda\geq 0\}$.  This means $\arg\max_\lambda \{h(\lambda,x)\ |\ \lambda\geq 0\} \supseteq \arg\max_\lambda \{\psi(\lambda,x)\ |\ \lambda\geq 0\}$.  Theorem 11.50 and Corollary 11.51 of \cite{rockafellar2009} give: $\mu^* \in \arg\max_\lambda \{h(\lambda,x)\ |\ \lambda\geq 0\}$ exists, $h(\mu^*,x) = f(x,y^*)$, $0 \in \partial_y \ell(x,y^*,\mu^*)$, and $0 \in \partial_\lambda [-\ell](x,y^*,\mu^*)$.  The condition $0 \in \partial_\lambda [-\ell](x,y^*,\mu^*)$ implies $0 \in \partial_\lambda [-l](x,y^*,\mu^*)$ and $y \in \phi(x)$.  This second consequence implies $y \in \text{int}(Y)$ by \textbf{A2}, and hence $N_Y(y) = \{0\}$.  So $0 \in \partial_y l(x,y^*,\mu^*)$ because $0 \in \partial_y \ell(x,y^*,\mu^*)$.  Applying Theorem 11.50 and Corollary 11.51 of \cite{rockafellar2009} implies that $\mu^* \in \arg\max_\lambda \{\psi(\lambda,x)\ |\ \lambda\geq 0\}$.  Thus, $\arg\max_\lambda \{h(\lambda,x)\ |\ \lambda\geq 0\} \subseteq \arg\max_\lambda \{\psi(\lambda,x)\ |\ \lambda\geq 0\}$.  Since we have shown both set inclusions, this implies equality and hence the second result.
\end{proof}

This result is nontrivial because a slight (and subtle) relaxation of the hypothesis causes the result to become untrue.  Suppose we replace \textbf{A2} with an assumption on the existence of a compact, convex set $Z$ with $Z \supseteq \{y : \exists x \in X \text{ s.t. } g(x,y) \leq 0\}$.  (The difference from \textbf{A2} is $\{y : \exists x \in X \text{ s.t. } g(x,y) \leq 0\}$ is in the interior of $Y$, while it is only a subset of $Z$.)  The above result fails because in general we have $\arg\max_\lambda \{\eta(\lambda,x)\ |\ \lambda\geq 0\} \supseteq \arg\max_\lambda \{\psi(\lambda,x)\ |\ \lambda\geq 0\}$ for $\eta(\lambda,x) = \min_y\{f(x,y)+\lambda^\textsf{T}g(x,y)\ |\ y\in Z\}$.  The following example provides one situation where this superset is proper, and this emphasizes the importance of \textbf{A2}.

\begin{example}
\label{ex:simplp}
Consider: $f(x,y) = y$, $g_1(x,y) = -y-1$, and $g_2(x,y) = y-1$.  If $Z = \phi(x) = \{y : y\in[-1,1]\}$, then $\eta(\lambda,x) = -|1-\lambda_1+\lambda_2|-\lambda_1-\lambda_2$ and 
\begin{equation}
\psi(\lambda,x) = \begin{cases} -\lambda_1-\lambda_2, &\text{if } -\lambda_1 + \lambda_2 = -1,\ \lambda \geq 0\\-\infty, &\text{otherwise}\end{cases}
\end{equation}
Thus $\arg\max_\lambda\{\psi(\lambda,x)\, |\, \lambda\geq 0\} = \{\lambda : \lambda_1 = 1, \lambda_2 = 0\}$, $\arg\max_\lambda\{\eta(\lambda,x)\, |\, \lambda\geq 0\} = \{\lambda : \lambda_1 \in [0,1], \lambda_2 = 0\}$, and $\arg\max_\lambda \{\eta(\lambda,x)\, |\, \lambda\geq 0\} \supset\arg\max_\lambda \{\psi(\lambda,x)\, |\, \lambda\geq 0\}$.\hfill$\blacklozenge$
\end{example}

Because the \textsf{CDF} is constructed to have the same solutions as the \textsf{LDF}, the \textsf{CDF} enjoys the same strong duality and saddle point properties of the \textsf{LDF}.  

\begin{corollary}
\label{cor:sadpoint}
Suppose $\mathbf{A1},\mathbf{A2}$ and $\mathbf{R1}$ hold.  If we have that $\lambda^*\in\arg\max_\lambda\{h(\lambda,x)\ |\ \lambda \geq 0\}$ and $y^*\in s(x)$, then $\min_y \{f(x,y)\ |\ g(x,y) \leq 0\} = \max_\lambda \{h(\lambda,x)\ |\ \lambda\geq 0\} = \mathcal{L}(x,y^*,\lambda^*)$ and $\mathcal{L}(x,y^*,\lambda) \leq \mathcal{L}(x,y^*,\lambda^*)\leq\mathcal{L}(x,y,\lambda^*)$ for all $y\in\mathbb{R}^m$ and $\lambda \geq 0$.
\end{corollary}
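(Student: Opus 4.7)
The plan is to assemble the corollary from pieces already produced in the proof of Theorem \ref{thm:eqrel}, combined with standard complementary slackness and convexity arguments. Theorem \ref{thm:eqrel} already gives $\max_\lambda\{h(\lambda,x)\,|\,\lambda\geq 0\} = \max_\lambda\{\psi(\lambda,x)\,|\,\lambda\geq 0\}$ and that any $\lambda^*$ maximizing $h(\cdot,x)$ also maximizes $\psi(\cdot,x)$. Moreover, the proof of Theorem \ref{thm:eqrel} invokes Theorem 11.50 and Corollary 11.51 of \cite{rockafellar2009} to establish $\psi(\lambda^*,x) = f(x,y^*)$, together with the inclusions $0\in\partial_y l(x,y^*,\lambda^*)$ and $0\in\partial_\lambda[-l](x,y^*,\lambda^*)$. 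Since $y^*\in s(x)$ and $f(x,y^*) = \min_y\{f(x,y)\,|\,g(x,y)\leq 0\}$, this already chains into the first equality once we verify $\mathcal{L}(x,y^*,\lambda^*) = f(x,y^*)$.

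Next I would derive complementary slackness. The inclusion $0 \in \partial_\lambda[-l](x,y^*,\lambda^*) = -g(x,y^*) + N_\Lambda(\lambda^*)$ implies $g(x,y^*)\in N_\Lambda(\lambda^*)$, and the explicit form of $N_\Lambda$ recorded in the preliminaries gives both $g(x,y^*)\leq 0$ (feasibility) and $\lambda^{*\mathsf{T}}g(x,y^*) = 0$. Substituting into $\mathcal{L}(x,y^*,\lambda^*) = f(x,y^*) + \lambda^{*\mathsf{T}}g(x,y^*)$ then yields the desired equality $\mathcal{L}(x,y^*,\lambda^*) = f(x,y^*)$, which completes the chain $\min_y\{f(x,y)\,|\,g(x,y)\leq 0\} = \max_\lambda\{h(\lambda,x)\,|\,\lambda\geq 0\} = \mathcal{L}(x,y^*,\lambda^*)$.

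For the saddle point inequalities I would argue separately in each variable. The left inequality is immediate: for any $\lambda \geq 0$, feasibility $g(x,y^*)\leq 0$ gives $\lambda^{\mathsf{T}}g(x,y^*)\leq 0 = \lambda^{*\mathsf{T}}g(x,y^*)$, hence $\mathcal{L}(x,y^*,\lambda) \leq \mathcal{L}(x,y^*,\lambda^*)$. The right inequality uses $0\in\partial_y l(x,y^*,\lambda^*)$, i.e., $\nabla_y f(x,y^*) + \nabla_y g(x,y^*)^{\mathsf{T}}\lambda^* = 0$. Under \textbf{A1}, the Lagrangian $\mathcal{L}(x,\cdot,\lambda^*)$ is convex in $y$ (as a non-negative combination of convex functions via $\lambda^*\geq 0$), so this first-order condition upgrades to $y^*$ being a global minimizer of $\mathcal{L}(x,\cdot,\lambda^*)$ over $\mathbb{R}^m$, giving $\mathcal{L}(x,y^*,\lambda^*)\leq \mathcal{L}(x,y,\lambda^*)$ for all $y$.

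There is no real obstacle here: the corollary is essentially bookkeeping on top of Theorem \ref{thm:eqrel}. The only point requiring care is to notice that the two inclusions $0\in\partial_y l(x,y^*,\lambda^*)$ and $0\in\partial_\lambda[-l](x,y^*,\lambda^*)$ extracted from the \textsf{LDF} side of the Rockafellar--Wets machinery already encode simultaneously the complementary slackness and the global minimization of the Lagrangian in $y$, and that convexity in \textbf{A1} is exactly what converts the subgradient condition into a global inequality.
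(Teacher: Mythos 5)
Your proposal is correct and follows essentially the same route as the paper: both transfer $\lambda^*$ to a maximizer of the \textsf{LDF} via Theorem~\ref{thm:eqrel} and then lean on Theorem~11.50 and Corollary~11.51 of \cite{rockafellar2009} applied to the generalized Lagrangian $l$, using that $l(x,y,\lambda)=\mathcal{L}(x,y,\lambda)$ for $\lambda\geq 0$. The only difference is cosmetic: the paper quotes the saddle-point characterization of $l$ directly, whereas you re-derive it by hand from the two inclusions (complementary slackness via $g(x,y^*)\in N_\Lambda(\lambda^*)$, plus convexity of $\mathcal{L}(x,\cdot,\lambda^*)$ under \textbf{A1} to upgrade stationarity to global minimality), which is a valid unpacking of the same cited machinery.
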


\begin{proof}
Theorem \ref{thm:eqrel} implies $\arg\max_\lambda \{h(\lambda,x)\ |\ \lambda\geq 0\} = \arg\max_\lambda \{\psi(\lambda,x)\ |\ \lambda\geq 0\}$, so $\lambda^*\in\arg\max_\lambda\{\psi(\lambda,x)\ |\ \lambda \geq 0\}$. Theorem 11.50 and Corollary 11.51 of \cite{rockafellar2009} give $\min_y \{f(x,y)\ |\ g(x,y) \leq 0\} = \max_\lambda \{h(\lambda,x)\ |\ \lambda\geq 0\} = l(x,y^*,\lambda^*)$ and $l(x,y^*,\lambda) \leq l(x,y^*,\lambda^*)\leq l(x,y,\lambda^*)$ for all $y\in\mathbb{R}^m$ and $\lambda \geq 0$, where $l(x,y,\lambda)$ is the generalized Lagrangian in the proof of Theorem \ref{thm:eqrel}.  But $\mathcal{L}(x,y,\lambda) = l(x,y,\lambda)$ when $\lambda \geq 0$.
\end{proof}

\textbf{A2} is again crucial, and the result does not hold if it is relaxed using the set $Z$ defined above. The saddle point result (i.e., the second part of the corollary) fails for $\mathcal{L}$.  (However, a saddle point result holds for the generalized Lagrangian $\ell(x,y,\lambda)$ defined in the proof of Theorem \ref{thm:eqrel}.)  The following continuation of the previous example shows this.

\addtocounter{example}{-1}
\begin{example}[continued]
If $Y = \{y : y\in[-2,2]\}$, then $\eta(\lambda,x) = -|1-\lambda_1+\lambda_2|-\lambda_1-\lambda_2$ and $\arg\max_\lambda\{\eta(\lambda,x)\ |\ \lambda\geq 0\} = \{\lambda : \lambda_1 \in [0,1] \text{ and } \lambda_2 = 0\}$.  Choosing $y = -2$ and $\lambda^*=0$ gives $\mathcal{L}(x,y,\lambda^*) = -2 < \mathcal{L}(x,y^*,\lambda^*) = -1$ because $y^*=-1$.  So maximizers of $\eta(\lambda,x)$ (which uses $Z$) do not satisfy the saddle point property for $\mathcal{L}$.  In contrast, note $h(\lambda,x) = -2\cdot|1-\lambda_1+\lambda_2|-\lambda_1-\lambda_2$.  A simple calculation gives $\arg\max_\lambda\{h(\lambda,x)\ |\ \lambda\geq 0\} = \{\lambda : \lambda_1 = 1 \text{ and } \lambda_2 = 0\}$ (matching Theorem \ref{thm:eqrel} since $\arg\max_\lambda\{\psi(\lambda,x)\ |\ \lambda\geq 0\} = \{\lambda : \lambda_1 = 1 \text{ and } \lambda_2 = 0\}$).  Thus, the solution provided by $h(\lambda,x)$ gives $\mathcal{L}(x,y,\lambda^*) = -1 \geq \mathcal{L}(x,y^*,\lambda^*) = -1 \geq \mathcal{L}(x,y^*,\lambda) = -1-2\lambda_2$ for all $y\in\mathbb{R}^m$ and $\lambda\geq 0$, which matches Corollary \ref{cor:sadpoint}.\hfill$\blacklozenge$
\end{example}

\addtocounter{example}{1}

\subsection{Differentiability}

The distinguishing property of the \textsf{CDF} is that it is differentiable, while the \textsf{LDF} is only directionally differentiable (see Example \ref{ex:lpdirdif}).  The differentiability occurs because the \textsf{CDF} is defined as a minimization over a compact set that is independent of $\lambda,x$.  In particular, if we define $\textstyle\sigma(\lambda,x) = \arg\min_y \{f(x,y)+\lambda^\textsf{T}g(x,y)\ |\ y\in Y\}$, then we can state the differentiability of the \textsf{CDF}.



\begin{theorem}
\label{thm:diffsing}
Suppose $\mathbf{A1},\mathbf{A2}$ and $\mathbf{R1}$ hold.  If $(\lambda,x)$ is such that $\sigma(\lambda,x)$ is singleton; then the $\mathsf{CDF}$ is differentiable at $(\lambda,x)$, and its gradient is given by
\begin{equation}
\label{eqn:gradone}
\begin{aligned}
&\nabla_x h(\lambda,x) = \nabla_x f(x,\overline{y}) + \lambda^\textsf{T}\nabla_x g(x, \overline{y})\\
&\nabla_\lambda h(\lambda,x)= g(x,\overline{y})
\end{aligned}
\end{equation}
where we have that $\{\overline{y}\} = \sigma(\lambda,x)$.
\end{theorem}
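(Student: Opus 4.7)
The plan is to apply a Danskin-style envelope argument, exploiting the crucial structural feature that the constraint set $Y$ in the definition of the \textsf{CDF} does not depend on $(\lambda, x)$.  Write $\mathcal{L}(x, y, \lambda) = f(x, y) + \lambda^\textsf{T} g(x, y)$; by \textbf{A1}, the Lagrangian is jointly $\mathcal{C}^2$ in all arguments, so $\nabla_x \mathcal{L}$ and $\nabla_\lambda \mathcal{L} = g$ are continuous on their domain.  Let $\{\bar{y}\} = \sigma(\lambda, x)$.

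The heart of the argument is a sandwich estimate.  For $(\lambda', x')$ near $(\lambda, x)$, pick any $y' \in \sigma(\lambda', x')$; such a $y'$ exists because $Y$ is compact and $\mathcal{L}$ is continuous in $y$.  Using $\bar{y} \in Y$ as a feasible test point for the primed problem and $y' \in Y$ as a feasible test point for the base problem yields
\begin{equation*}
    \mathcal{L}(x', y', \lambda') - \mathcal{L}(x, y', \lambda) \;\leq\; h(\lambda', x') - h(\lambda, x) \;\leq\; \mathcal{L}(x', \bar{y}, \lambda') - \mathcal{L}(x, \bar{y}, \lambda).
\end{equation*}
Let $\Delta = ((\lambda',x') - (\lambda,x))$.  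A first-order Taylor expansion of $\mathcal{L}(\cdot, y, \cdot)$ at $(\lambda, x)$, applied separately with $y = \bar{y}$ and $y = y'$, rewrites each side as $\nabla_{(\lambda, x)} \mathcal{L}(x, y, \lambda)^\textsf{T} \Delta + O(\|\Delta\|^2)$, where the constant hidden in $O$ can be chosen uniformly in $y \in Y$ because $\mathcal{L}$ is $\mathcal{C}^2$ and $Y$ is compact.

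The main obstacle is therefore establishing that $y' \to \bar{y}$ as $(\lambda', x') \to (\lambda, x)$; this amounts to outer semicontinuity of $\sigma$ combined with the singleton hypothesis.  For any sequence $y^\nu \in \sigma(\lambda^\nu, x^\nu)$ with $(\lambda^\nu, x^\nu) \to (\lambda, x)$, compactness of $Y$ provides an accumulation point $y^*$; passing to the limit in the optimality inequality $\mathcal{L}(x^\nu, y^\nu, \lambda^\nu) \leq \mathcal{L}(x^\nu, z, \lambda^\nu)$ (valid for every fixed $z \in Y$) forces $y^* \in \sigma(\lambda, x) = \{\bar{y}\}$, so the whole sequence converges to $\bar{y}$.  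Continuity of the gradient then gives $\nabla_{(\lambda, x)} \mathcal{L}(x, y', \lambda) \to \nabla_{(\lambda, x)} \mathcal{L}(x, \bar{y}, \lambda)$, collapsing both sides of the sandwich to $\nabla_{(\lambda, x)} \mathcal{L}(x, \bar{y}, \lambda)^\textsf{T} \Delta + o(\|\Delta\|)$.  This is precisely the definition of differentiability of $h$ at $(\lambda, x)$, and reading off the $x$ and $\lambda$ blocks of $\nabla_{(\lambda, x)} \mathcal{L}(x, \bar{y}, \lambda)$ gives the formulas in (\ref{eqn:gradone}).
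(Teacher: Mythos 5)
Your argument is correct and is, in substance, the same route the paper takes: the paper simply cites Theorem 4.13 and Remark 4.14 of Bonnans--Shapiro, which is exactly the Danskin-type envelope theorem you prove from scratch via the sandwich inequality, the uniform second-order remainder over the compact, parameter-independent set $Y$, and outer semicontinuity of $\sigma$ combined with the singleton hypothesis. Your write-up is a valid self-contained replacement for that citation (and, as a minor bonus, it makes explicit that convexity in $y$ plays no role here, only smoothness and compactness of $Y$).
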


\begin{proof}
This follows from Theorem 4.13 and Remark 4.14 of \cite{bonnans2000}. 
\end{proof}

Though determining if $\sigma(\lambda,x)$ is singleton can be difficult, a simple-to-check condition ensures this is always the case:

\begin{corollary}
\label{cor:strtconv}
Suppose $\mathbf{A1},\mathbf{A2}$ and $\mathbf{R1}$ hold.  If $\lambda \geq 0$ and $f(x,y)$ is strictly convex in $y$ for every $x\in X$; then the $\mathsf{CDF}$ is differentiable at $(\lambda,x)$, and its gradient is given in (\ref{eqn:gradone}), where we have that $\{\overline{y}\} = \sigma(\lambda,x)$.
\end{corollary}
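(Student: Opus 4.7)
The plan is to deduce this corollary directly from Theorem \ref{thm:diffsing}: that theorem already delivers both differentiability and the gradient formula (\ref{eqn:gradone}) whenever $\sigma(\lambda,x)$ is a singleton, so it suffices to show that the strict convexity of $f(x,\cdot)$ combined with $\lambda \geq 0$ forces $\sigma(\lambda,x) = \arg\min_y\{f(x,y)+\lambda^\textsf{T}g(x,y)\ |\ y\in Y\}$ to contain exactly one point.

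The first step is to verify that the Lagrangian $\mathcal{L}(x,y,\lambda) = f(x,y) + \lambda^\textsf{T}g(x,y)$ is strictly convex in $y$ for fixed $x\in X$ and $\lambda\geq 0$. By \textbf{A1}, each component $g_i(x,\cdot)$ is convex in $y$, and since $\lambda_i \geq 0$ the sum $\lambda^\textsf{T}g(x,\cdot) = \sum_i \lambda_i g_i(x,\cdot)$ is a nonnegative combination of convex functions and therefore convex. Adding the strictly convex $f(x,\cdot)$ to this convex function yields a strictly convex sum.

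The second step is to combine \textbf{A2} with \textbf{A1} to obtain existence of a minimizer: $Y$ is nonempty, compact and convex, and $\mathcal{L}(x,\cdot,\lambda)$ is continuous in $y$ (indeed $\mathcal{C}^2$), so the Weierstrass theorem guarantees $\sigma(\lambda,x)$ is nonempty. Strict convexity then forces uniqueness, since a convex combination of two distinct minimizers would itself lie in $Y$ (by convexity of $Y$) and would strictly decrease $\mathcal{L}$, a contradiction. Hence $\sigma(\lambda,x)$ is a singleton, and Theorem \ref{thm:diffsing} applies to deliver differentiability and the gradient formula (\ref{eqn:gradone}).

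I do not anticipate a serious obstacle here, since the argument is just a clean reduction: strict convexity plus compactness of the constraint set is a textbook route to uniqueness of minimizers, and the gradient calculation is entirely handled by the cited theorem. The only mild care required is tracking which parts of \textbf{A1}, \textbf{A2}, and the sign condition $\lambda \geq 0$ are invoked at each step, all of which are available under the corollary's hypotheses.
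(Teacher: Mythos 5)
Your proposal is correct and follows essentially the same route as the paper: establish strict convexity of $y \mapsto f(x,y)+\lambda^\textsf{T}g(x,y)$ from \textbf{A1} and $\lambda \geq 0$, conclude that $\sigma(\lambda,x)$ is a nonempty singleton (the paper cites Example 1.11 and Theorem 2.6 of the variational analysis reference where you argue via Weierstrass and the standard strict-convexity uniqueness argument), and then invoke Theorem \ref{thm:diffsing}. The only difference is that you spell out the textbook steps the paper delegates to citations.
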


\begin{proof}
Since $\lambda \geq 0$, $f(x,y)+\lambda^\textsf{T}g(x,y)$ is strictly convex in $y$ for every $x \in X$ (see for instance Exercise 2.18 in \cite{rockafellar2009}).  Example 1.11 and Theorem 2.6 of \cite{rockafellar2009} imply $\sigma(\lambda,x)$ is singleton.  We can then apply Theorem \ref{thm:diffsing}.
\end{proof}

For the case where $f(x,y)$ is not strictly convex, we can define a regularized \text{CDF} that is guaranteed to be differentiable.  In particular, we define the regularized constrained Lagrangian dual function (\textsf{RDF}) to be 
\begin{multline}
h_\mu(\lambda,x) = \\\textstyle\min_y \{\mu\|y\|^2 + f(x,y) + \lambda^\textsf{T}g(x,y)\ |\ y\in Y\},
\end{multline}
where $\mu \geq 0$.  We can interpret this as the \textsf{CDF} for an optimization problem where the objective has been changed to $\mu\|y\|^2 + f(x,y)$.  The benefit of adding the $\mu\|y\|^2$ term is it makes the objective of the optimization problem defining $h_\mu(\lambda,x)$ strictly convex, and therefore ensures the \textsf{RDF} is differentiable as long as $\mu > 0$.  More formally, if $\textstyle\sigma_\mu(\lambda,x) = \arg\min_y \{\mu\|y\|^2+f(x,y)+\lambda^\textsf{T}g(x,y)\ |\ y\in Y\}$, then:

\begin{corollary}
\label{cor:difmu}
Suppose $\mathbf{A1},\mathbf{A2}$ and $\mathbf{R1}$ hold.  If $\lambda \geq 0$ and $\mu > 0$; then the $\mathsf{RDF}$ is differentiable at $(\lambda,x)$, and its gradient is given by
\begin{equation}
\label{eqn:defnab}
\begin{aligned}
&\nabla_x h_\mu(\lambda,x) = \nabla_x f(x,\overline{y}) + \lambda^\textsf{T}\nabla_x g(x, \overline{y})\\
&\nabla_\lambda h_\mu(\lambda,x)= g(x,\overline{y})
\end{aligned}
\end{equation}
where we have that $\{\overline{y}\} = \sigma_\mu(\lambda,x)$.
\end{corollary}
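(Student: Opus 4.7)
The plan is to deduce this corollary directly from Corollary \ref{cor:strtconv} by reinterpreting the $\textsf{RDF}$ as the $\textsf{CDF}$ of a modified lower-level problem whose objective is $\tilde f(x,y)=f(x,y)+\mu\|y\|^2$. By construction, $h_\mu(\lambda,x)=\min_y\{\tilde f(x,y)+\lambda^\textsf{T} g(x,y)\mid y\in Y\}$ and $\sigma_\mu(\lambda,x)$ is exactly the corresponding argmin set, so the modified problem's $\textsf{CDF}$ is literally $h_\mu$ with argmin mapping $\sigma_\mu$.

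The first step is to check that this modified lower-level problem satisfies the hypotheses of Corollary \ref{cor:strtconv}. The quadratic perturbation $\mu\|y\|^2$ is in $\mathcal{C}^2$ and is convex in $y$, so $\tilde f$ still satisfies \textbf{A1}; the constraint function $g$ and the set $X$ are unchanged, so \textbf{A2} and \textbf{R1} continue to hold. Moreover, because $\mu>0$, the quadratic term is strictly convex in $y$, and hence $\tilde f$ is strictly convex in $y$ for every $x\in X$, which is precisely the extra hypothesis Corollary \ref{cor:strtconv} imposes on the lower-level objective.

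The second step is to invoke Corollary \ref{cor:strtconv} applied to the modified problem. It yields that $\sigma_\mu(\lambda,x)=\{\bar y\}$ is singleton and that $h_\mu$ is differentiable at $(\lambda,x)$ with
\[
\nabla_x h_\mu(\lambda,x)=\nabla_x\tilde f(x,\bar y)+\lambda^\textsf{T}\nabla_x g(x,\bar y),\qquad \nabla_\lambda h_\mu(\lambda,x)=g(x,\bar y).
\]
Because the regularizer $\mu\|y\|^2$ contains no $x$, one has $\nabla_x\tilde f(x,\bar y)=\nabla_x f(x,\bar y)$, which matches (\ref{eqn:defnab}) exactly; the $\lambda$-gradient requires no adjustment because the regularizer involves neither $\lambda$ nor $g$.

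I do not anticipate a substantive obstacle: the reduction is essentially bookkeeping. The only care required is in recognizing that the $\textsf{RDF}$ is itself a $\textsf{CDF}$ for a perturbed lower-level problem, and in noting that the strict convexity required by Corollary \ref{cor:strtconv} is furnished automatically by $\mu>0$ regardless of the convexity modulus of the original $f$.
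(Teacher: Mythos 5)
Your proposal is correct and is essentially the paper's own argument: the paper also views the $\mathsf{RDF}$ as the $\mathsf{CDF}$ of the problem with objective $\mu\|y\|^2 + f(x,y)$, observes that this sum is strictly convex in $y$ when $\mu>0$ (citing Exercise 2.18 of Rockafellar--Wets), and then invokes Corollary \ref{cor:strtconv}. Your write-up merely makes explicit the bookkeeping the paper leaves implicit (that \textbf{A1}, \textbf{A2}, \textbf{R1} persist for the perturbed objective and that $\nabla_x$ of the regularizer vanishes), which is fine.
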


\begin{proof}
Since $\|y\|^2$ is strictly convex and $f(x,y)$ is convex, $\mu\|y\|^2+f(x,y)$ is strictly convex in $y$ for every $x \in X$ (Exercise 2.18 in \cite{rockafellar2009}).  So Corollary \ref{cor:strtconv} applies.
\end{proof}

More generally, both the \textsf{CDF} and \textsf{RDF} have a strong type of regularity because of their construction.  This regularity will be useful for proving subsequent results.  

\begin{proposition}
\label{prop:hmulc2}
Suppose $\mathbf{A1},\mathbf{A2}$ and $\mathbf{R1}$ hold.  Then for $\mu \geq 0$, we have $[-h]_\mu(\lambda,x)$ is locally Lipschitz continuous; and its subgradient is nonempty, compact, and given by
\begin{equation}
\label{eqn:subgradhmu}
\begin{aligned}
&\partial_x [-h]_\mu(\lambda,x) = -\mathrm{co}\{\nabla_x f(x,\overline{y}) + \\
&\hspace{3.3cm}\lambda^\textsf{T}\nabla_x g(x, \overline{y})\rangle\ |\ \overline{y} \in \sigma_\mu(\lambda,x)\}\\
&\partial_\lambda [-h]_\mu(\lambda,x) = -\mathrm{co}\{g(x,\overline{y})\ |\ \overline{y} \in \sigma_\mu(\lambda,x)\}
\end{aligned}
\end{equation}
where we have that $\sigma_\mu(\lambda,x) = \arg\min_y \{\mu\|y\|^2+f(x,y)+\lambda^\textsf{T}g(x,y)\ |\ y\in Y\}$.
\end{proposition}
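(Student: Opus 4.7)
The plan is to apply a Danskin-type theorem on subdifferentials of parametric max functions to the representation $-h_\mu(\lambda,x) = \max_{y \in Y}\{-\phi_\mu(\lambda,x,y)\}$, where $\phi_\mu(\lambda,x,y) := \mu\|y\|^2 + f(x,y) + \lambda^\textsf{T} g(x,y)$, after first establishing local Lipschitz continuity.

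First, I would establish local Lipschitz continuity of $h_\mu$ (hence of $-h_\mu$). By \textbf{A1}, $\phi_\mu$ is jointly $\mathcal{C}^2$, and by \textbf{A2} the set $Y$ is compact. On any bounded neighborhood $U$ of a fixed $(\lambda_0,x_0)$, the gradient $\nabla_{(\lambda,x)}\phi_\mu$ is therefore uniformly bounded on the compact set $U \times Y$. The elementary pointwise bound $|h_\mu(\lambda,x) - h_\mu(\lambda',x')| \le \sup_{y \in Y}|\phi_\mu(\lambda,x,y) - \phi_\mu(\lambda',x',y)|$ then yields local Lipschitzness with constant equal to that uniform bound, and in particular guarantees that $\partial[-h_\mu](\lambda,x)$ is nonempty everywhere.

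Second, I would verify the standard regularity hypotheses needed for a Danskin-type formula: for each $(\lambda,x)$ the solution set $\sigma_\mu(\lambda,x)$ is nonempty and compact by Weierstrass applied to $\phi_\mu(\lambda,x,\cdot)$ on the compact set $Y$, and by Berge's maximum theorem $\sigma_\mu$ is outer semicontinuous. I would then apply Theorem 4.13 and Remark 4.14 of \cite{bonnans2000} (the same reference invoked in the proof of Theorem \ref{thm:diffsing}) to express $\partial[-h_\mu](\lambda,x)$ as the convex hull of $-\nabla_{(\lambda,x)}\phi_\mu(\lambda,x,\overline{y})$ over $\overline{y} \in \sigma_\mu(\lambda,x)$. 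Substituting $\nabla_\lambda \phi_\mu = g(x,\overline{y})$ and $\nabla_x \phi_\mu = \nabla_x f(x,\overline{y}) + \lambda^\textsf{T}\nabla_x g(x,\overline{y})$ gives exactly (\ref{eqn:subgradhmu}). Compactness of the subdifferential follows because $\sigma_\mu(\lambda,x)$ is compact, the partial gradient maps of $\phi_\mu$ are continuous in $y$, so their images are compact, and the convex hull of a compact set in finite dimensions is compact.

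The main obstacle I anticipate is reconciling the paper's ``subgradient'' $\partial$ (defined in the preliminaries as the limiting/Mordukhovich subdifferential) with the Clarke-type subdifferential that the marginal function theorem naturally produces. Since $-h_\mu$ is locally Lipschitz, the Clarke subdifferential coincides with the convex hull of the limiting subdifferential, which matches the explicit $\mathrm{co}$ already appearing on the right-hand side of (\ref{eqn:subgradhmu}), so no further reconciliation is required.
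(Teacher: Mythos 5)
Your overall route---writing $[-h]_\mu(\lambda,x)=\max_{y\in Y}\{-\phi_\mu(\lambda,x,y)\}$ as a pointwise maximum of smooth functions over the compact set $Y$ and invoking a marginal-function theorem---is the same as the paper's, and your elementary estimate $|h_\mu(\lambda,x)-h_\mu(\lambda',x')|\leq\sup_{y\in Y}|\phi_\mu(\lambda,x,y)-\phi_\mu(\lambda',x',y)|$ is a perfectly sound (arguably more self-contained) substitute for the paper's one-line observation that $[-h]_\mu$ is lower-$\mathcal{C}^2$. The gap is in the step that produces the formula (\ref{eqn:subgradhmu}). Theorem 4.13 and Remark 4.14 of \cite{bonnans2000} are a Danskin-type result: they give the \emph{directional derivative} of the value function as an extremum of the parametric derivative over $\sigma_\mu(\lambda,x)$, and differentiability when $\sigma_\mu(\lambda,x)$ is a singleton (which is exactly how Theorem \ref{thm:diffsing} uses them). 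They do not assert a subdifferential formula. The directional derivative you obtain is the support function of the convex hull appearing in (\ref{eqn:subgradhmu}); to conclude that this convex hull \emph{is} the subdifferential you need $[-h]_\mu$ to be subdifferentially regular, so that $\partial[-h]_\mu$ is recovered from its support function. The paper avoids this by citing Theorem 2.1 of \cite{clarke1975}, which directly gives the generalized gradient of a max-function.

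Your closing reconciliation of the limiting subgradient with the Clarke subdifferential has the same hole and does not close it. For a locally Lipschitz function the Clarke subdifferential equals the convex hull of the limiting subdifferential; so even granting that the Clarke subdifferential equals the right-hand side of (\ref{eqn:subgradhmu}), this only shows that the \emph{convex hull} of $\partial[-h]_\mu$ equals that set, not that $\partial[-h]_\mu$ itself does---the limiting subdifferential of a Lipschitz function need not be convex (e.g.\ $x\mapsto -|x|$ at the origin). The missing ingredient in both places is the same: because $[-h]_\mu$ is a maximum of $\mathcal{C}^2$ functions over a compact index set, it is lower-$\mathcal{C}^2$, hence regular \cite{rockafellar2009,rockafellar1982}, and for regular locally Lipschitz functions the limiting and Clarke subdifferentials coincide. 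Adding that one observation---which is precisely the paper's opening move---repairs both soft spots; the rest of your argument (nonemptiness from local Lipschitz continuity, compactness from compactness of $\sigma_\mu(\lambda,x)$ and continuity of the gradient maps) is fine.
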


\begin{proof}
Note $[-h]_\mu(\lambda,x) = \max_y \{-\mu\|y\|^2 - f(x,y) - \lambda^\textsf{T}g(x,y)\rangle\ |\ y\in Y\}$, by rewriting the definition of $h_\mu(\lambda,x)$.  So $[-h]_\mu$ is lower-$\mathcal{C}^2$ by definition (see \cite{rockafellar2009,rockafellar1982}).  This implies local Lipschitz continuity \cite{rockafellar2009,rockafellar1982}.  Theorem 9.13 of \cite{rockafellar2009} gives nonemptiness and compactness of the subgradient, and the formula (\ref{eqn:subgradhmu}) is due to Theorem 2.1 of \cite{clarke1975}.
\end{proof}

\subsection{Convergence Properties}

An important aspect of the \textsf{RDF} is it epi-converges to the \textsf{CDF} as $\mu \rightarrow 0$.  Note this convergence does not require $\sigma(\lambda,x)$ to be singleton, and hence applies even when $f(x,y)$ is not strictly convex in $y$ for every $x \in X$.  Also, note the epi-convergence result applies to $-h(\lambda,\mu)$ and $-h_\mu(\lambda,\mu)$ since we are typically concerned with maximizing the dual.


\begin{proposition}
\label{prop:epicon}
Suppose $\mathbf{A1},\mathbf{A2}$ and $\mathbf{R1}$ hold.  Then the function $[-h]_\mu(\lambda, x)$ is pointwise decreasing in $\mu$, and we have that $\displaystyle\elim_{\mu\rightarrow 0} [-h]_\mu(\lambda,x) = [-h](\lambda,x)$.
\end{proposition}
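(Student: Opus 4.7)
The plan is to first show pointwise monotonicity in $\mu$ and pointwise convergence $h_\mu \to h$ as $\mu \to 0$, and then upgrade pointwise convergence to epi-convergence by combining monotonicity with the local Lipschitz continuity guaranteed by Proposition \ref{prop:hmulc2}.

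Monotonicity is immediate: for every fixed $y$, the integrand $\mu\|y\|^2 + f(x,y) + \lambda^\textsf{T}g(x,y)$ is nondecreasing in $\mu\geq 0$, and the pointwise minimum of a nondecreasing family of functions is itself nondecreasing, so $h_\mu(\lambda,x)$ is nondecreasing in $\mu$ and $[-h]_\mu(\lambda,x)$ is nonincreasing. For pointwise convergence I would use a sandwich: $h_\mu \geq h$ because $\mu\|y\|^2\geq 0$, while any minimizer $y_0$ of the $\mu=0$ problem (which exists by \textbf{A1}, \textbf{A2}) plugged into the $\mu$-problem yields $h_\mu(\lambda,x) \leq \mu\|y_0\|^2 + h(\lambda,x) \leq \mu M + h(\lambda,x)$, where $M = \max_{y\in Y}\|y\|^2 < \infty$ by compactness of $Y$. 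Sending $\mu\to 0$ gives $h_\mu(\lambda,x)\to h(\lambda,x)$ at every $(\lambda,x)$.

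The main obstacle is the $\liminf$ condition for epi-convergence, since we must simultaneously control a joint limit in $(\mu^\nu,\lambda^\nu,x^\nu)$. The key idea is to decouple these limits via the monotonicity just proved. Fix $(\lambda,x)$ and take any $(\lambda^\nu,x^\nu)\to (\lambda,x)$ with $\mu^\nu \to 0$. For any fixed $\mu_0>0$, eventually $\mu^\nu \leq \mu_0$, so monotonicity gives $[-h]_{\mu^\nu}(\lambda^\nu,x^\nu) \geq [-h]_{\mu_0}(\lambda^\nu,x^\nu)$. Proposition \ref{prop:hmulc2} provides that $[-h]_{\mu_0}$ is locally Lipschitz and hence continuous, so the right-hand side converges to $[-h]_{\mu_0}(\lambda,x)$; letting $\mu_0\downarrow 0$ and invoking pointwise convergence from the previous step then yields $\liminf_\nu [-h]_{\mu^\nu}(\lambda^\nu,x^\nu) \geq [-h](\lambda,x)$. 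The $\limsup$ side is routine: use the constant recovery sequence $(\lambda^\nu,x^\nu)=(\lambda,x)$, for which pointwise convergence gives $\limsup_\nu [-h]_{\mu^\nu}(\lambda,x) = [-h](\lambda,x)$. Together these two inequalities give $\elim_{\mu\to 0}[-h]_\mu = [-h]$, completing the proof.
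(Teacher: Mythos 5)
Your proof is correct, and its overall skeleton matches the paper's: establish monotonicity of $[-h]_\mu$ in $\mu$, establish pointwise convergence as $\mu\to 0$, and then convert monotone pointwise convergence into epi-convergence. The differences are in execution, and they are worth noting. For pointwise convergence the paper invokes Proposition 7.4.c and Theorem 7.33 of Rockafellar--Wets (epi-convergence of the inner objectives plus convergence of infima), whereas you use the elementary sandwich $h(\lambda,x) \leq h_\mu(\lambda,x) \leq h(\lambda,x) + \mu M$ with $M = \max_{y\in Y}\|y\|^2$; this is more self-contained and even gives an explicit $O(\mu)$ rate, at the cost of leaning directly on compactness of $Y$ (which is available here by \textbf{A2}). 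For the final step the paper cites Proposition 7.4.d of Rockafellar--Wets, which says a nondecreasing family of functions epi-converges to the lower closure of its pointwise supremum (the closure being harmless since $[-h]$ is continuous by the Berge maximum theorem); you instead verify the two defining inequalities of epi-convergence by hand, decoupling the joint limit in $(\mu^\nu,\lambda^\nu,x^\nu)$ via monotonicity and the continuity of $[-h]_{\mu_0}$ supplied by Proposition \ref{prop:hmulc2}. In effect you have reproved the relevant special case of Proposition 7.4.d inline. One cosmetic remark: Proposition \ref{prop:hmulc2} gives local Lipschitz continuity for all $\mu\geq 0$, including $\mu_0 = 0$, so your $\liminf$ argument could be shortened by applying it directly to $[-h]_0 = [-h]$ rather than passing through an auxiliary $\mu_0 > 0$ and then sending $\mu_0\downarrow 0$; but the two-step version you wrote is also valid.
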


\begin{proof}
The Berge maximum theorem \cite{berge1963} implies $h(\lambda,x)$ and $h_{\mu}(\lambda,x)$ are continuous (for each fixed $\mu > 0$). Second, note Proposition 7.4.c of \cite{rockafellar2009} gives that for fixed $\lambda,x$ we have $\elim_{\mu\rightarrow 0} \mu\|y\|^2 + f(x,y) + \lambda^\textsf{T}g(x,y) = f(x,y) + \lambda^\textsf{T}g(x,y)$.  And so by Theorem 7.33 of \cite{rockafellar2009}, we have for fixed $\lambda,x$ that $\lim_{\mu\rightarrow 0} h_\mu(\lambda,x) = h(\lambda,x)$.  Now let $\overline{y}\in\sigma_\mu(\lambda,x)$, and observe that for $0\leq \mu_1 < \mu_2$ we have $h_{\mu_1}(\lambda, x) \leq \mu_1\|\overline{y}\|^2 + f(x,\overline{y}) + \lambda^\textsf{T}g(x,\overline{y})\leq \mu_2\|\overline{y}\|^2 + f(x,\overline{y}) + \lambda^\textsf{T}g(x,\overline{y})= h_{\mu_2}(\lambda, x)$.  Thus, $[-h]_\mu(\lambda, x)$ is decreasing in $\mu$.  This implies $\sup_{\mu > 0} [-h]_\mu(\lambda, x) = [-h](\lambda, x)$ since from above we have $\lim_{\mu\rightarrow 0} [-h]_\mu(\lambda,x) = [-h](\lambda,x)$.  Finally, using Proposition 7.4.d of \cite{rockafellar2009} gives the desired result: $\elim_{\mu\rightarrow 0} [-h]_\mu(\lambda,x) = [-h](\lambda,x)$.
\end{proof}

\section{Duality-Based Reformulation}
\label{section:dbref}

It will be more convenient to work with the approximate bilevel programming problem, which is defined as
\begin{equation*}\tag*{\hbox{$\mathsf{BLP}(\epsilon)$}}
\begin{aligned}
\min_{x,y}\ & F(x,y)\\
\text{s.t. } & G(x) \leq 0\\
& y \in \epsilon\text{-}\arg\textstyle\min_y \{f(x,y)\ |\ g(x,y) \leq 0\}
\end{aligned}
\end{equation*}
where $y \in \epsilon\text{-}\arg\textstyle\min_y \{f(x,y)\ |\ g(x,y) \leq 0\}$ means $f(x,y) \leq \min_y \{f(x,y)\ |\ g(x,y) \leq 0\}+\epsilon$ and $g(x,y) \leq \epsilon$.  (Equivalently, we have that $y$ is an $\epsilon$-solution in the sense of \cite{nemirovski2004,nesterov1994}.) This problem is equivalent to \textsf{BLP} when $\epsilon = 0$.

We first define our duality-based reformulation of \textsf{BLP}$(\epsilon)$, and then show its equivalence to the approximate bilevel program.  Next we study constraint qualification of our reformulation and provide conditions that ensure \textsf{MFCQ} holds.  Since the duality-based reformulation has regularization, we conclude by providing sufficient conditions that ensure convergence of solutions to the regularized duality-based reformulation to solutions of the limiting problem.

\subsection{Definition}

Our duality-based reformulation of \textsf{BLP}$(\epsilon)$ using \textsf{RDF} is
\begin{equation*}\tag*{\hbox{$\mathsf{DBP}(\epsilon,\mu)$}}
\begin{aligned}
\min_{x,y,\lambda}\ & F(x,y)\\
\text{s.t. } & (x,y,\lambda)\in\mathcal{C}(\epsilon,\mu)
\end{aligned}
\end{equation*}
where the feasible set of $\mathsf{DBP}(\epsilon,\mu)$ is given by
\begin{equation}
\mathcal{C}(\epsilon,\mu) = \left\{(x,y,\lambda) : \begin{aligned}& G(x) \leq 0,\, g(x,y) \leq \epsilon,\, \lambda \geq 0\\
& f(x,y) - h_{\mu}(\lambda,x) \leq \epsilon
\end{aligned}\right\}
\end{equation}
One useful property of the reformulation \textsf{DBP}$(\epsilon,\mu)$ is that it is convex when $x$ is fixed, and a proof of a less general version of this result is found in Proposition 6 of \cite{aswani2015}.  

The next result shows that upper-bounding the objective by the \textsf{RDF}, which is done in the feasible set of \textsf{DBP}$(\epsilon,\mu)$, is an optimality condition for the lower level problem. 
\begin{proposition}
\label{prop:epssol}
Suppose $\mathbf{A1},\mathbf{A2}$ and $\mathbf{R1}$ hold.  Then a point $y$ is an $\epsilon$-solution to the lower level problem if and only if there exists $\lambda$ such that $(x,y,\lambda) \in \mathcal{C}(\epsilon,0)$.  If $\mu \geq 0$ and a point $y$ is an $\epsilon$-solution to the lower level problem, then there exists $\lambda$ such that $(x,y,\lambda)\in\mathcal{C}(\epsilon,\mu)$.
\end{proposition}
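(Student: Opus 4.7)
The plan is to leverage the strong duality result already in hand (Corollary~\ref{cor:sadpoint}) together with the monotonicity of $h_\mu$ in $\mu$ established in the proof of Proposition~\ref{prop:epicon}. Throughout, I assume $G(x) \leq 0$, since that constraint appears symmetrically in the definitions of $\mathcal{C}(\epsilon,\mu)$ and in the lower level problem being analyzed.

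For the first (``if and only if'') part with $\mu = 0$, I would handle the two directions separately. For the forward direction, suppose $y$ is an $\epsilon$-solution, meaning $f(x,y) \leq \varphi(x) + \epsilon$ and $g(x,y) \leq \epsilon$. By Corollary~\ref{cor:sadpoint} we may choose $\lambda^* \in \arg\max_\lambda\{h(\lambda,x) \,|\, \lambda \geq 0\}$ with $h(\lambda^*,x) = \varphi(x)$, so substituting gives $f(x,y) - h(\lambda^*,x) = f(x,y) - \varphi(x) \leq \epsilon$, and the remaining conditions defining $\mathcal{C}(\epsilon,0)$ hold by hypothesis. For the converse, suppose $(x,y,\lambda) \in \mathcal{C}(\epsilon,0)$. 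Weak duality gives $h(\lambda,x) \leq \varphi(x)$: picking any $y^* \in s(x)$ (which lies in $Y$ by \textbf{A2} since $g(x,y^*) \leq 0$), and observing $\lambda \geq 0$, yields $h(\lambda,x) \leq f(x,y^*) + \lambda^\textsf{T} g(x,y^*) \leq f(x,y^*) = \varphi(x)$. Therefore $f(x,y) \leq h(\lambda,x) + \epsilon \leq \varphi(x) + \epsilon$, and combined with $g(x,y) \leq \epsilon$ this shows $y$ is an $\epsilon$-solution.

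For the second statement with $\mu \geq 0$, I would invoke the monotonicity argument from the proof of Proposition~\ref{prop:epicon}: for $y' \in \sigma(\lambda,x) \subseteq Y$ we have $h(\lambda,x) = f(x,y') + \lambda^\textsf{T} g(x,y') \leq \mu\|y'\|^2 + f(x,y') + \lambda^\textsf{T} g(x,y') \leq h_\mu(\lambda,x)$. So given an $\epsilon$-solution $y$, the forward direction of the first part yields $\lambda^*$ with $f(x,y) - h(\lambda^*,x) \leq \epsilon$, and then $f(x,y) - h_\mu(\lambda^*,x) \leq f(x,y) - h(\lambda^*,x) \leq \epsilon$, giving $(x,y,\lambda^*) \in \mathcal{C}(\epsilon,\mu)$.

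There is no genuine obstacle here, since the proposition is essentially a restatement of strong duality in the $\epsilon$-approximate setting; the only subtle point worth being explicit about is that weak duality in the converse direction relies on $s(x) \subseteq Y$, which is guaranteed by \textbf{A2} (together with non-emptiness of $s(x)$ under \textbf{A1} and \textbf{R1}), and that the converse fails for $\mu > 0$ precisely because the inequality $h_\mu(\lambda,x) \geq h(\lambda,x)$ makes the dual-based constraint strictly weaker. That asymmetry is exactly why the proposition is stated as an ``if and only if'' only for $\mu = 0$.
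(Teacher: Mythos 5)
Your argument is correct in substance but takes a different route from the paper's for the $\mu=0$ equivalence. The paper does not re-derive the duality characterization of $\epsilon$-solutions: it cites Proposition 5 of \cite{aswani2015}, which says $y$ is an $\epsilon$-solution iff there is $\lambda\geq 0$ with $f(x,y)-\psi(\lambda,x)\leq\epsilon$ and $g(x,y)\leq\epsilon$ (with $\psi$ the Lagrangian dual), and then transfers this statement from $\psi$ to $h_0$ by comparing maximizers via Theorem~\ref{thm:eqrel}. You instead prove both directions from scratch: the forward implication via strong duality and attainment of $\max_\lambda h(\lambda,x)=\varphi(x)$ from Corollary~\ref{cor:sadpoint}, and the converse via an explicit weak-duality computation $h(\lambda,x)\leq f(x,y^*)+\lambda^{\textsf{T}}g(x,y^*)\leq\varphi(x)$, which is valid precisely because $s(x)\subseteq Y$ under \textbf{A2}. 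Your version is more self-contained (it avoids the external citation and the detour through $\psi$) at the cost of re-proving a fact the paper treats as known; both arguments ultimately rest on the same zero-duality-gap property. The second statement is handled the same way in both proofs, via monotonicity of $h_\mu$ in $\mu$.

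One inequality in your monotonicity chain is written backwards: for $y'\in\sigma(\lambda,x)$ you assert $\mu\|y'\|^2+f(x,y')+\lambda^{\textsf{T}}g(x,y')\leq h_\mu(\lambda,x)$, but $h_\mu$ is a \emph{minimum} over $Y\ni y'$, so that quantity is $\geq h_\mu(\lambda,x)$, and the chain as written does not deliver $h\leq h_\mu$. The fix is to evaluate at a minimizer of the regularized problem, $y''\in\sigma_\mu(\lambda,x)$: then $h(\lambda,x)\leq f(x,y'')+\lambda^{\textsf{T}}g(x,y'')\leq\mu\|y''\|^2+f(x,y'')+\lambda^{\textsf{T}}g(x,y'')=h_\mu(\lambda,x)$, which is exactly how the proof of Proposition~\ref{prop:epicon} argues. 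Since you are invoking that proposition anyway, the fact you actually need ($h_\mu\geq h_0$ pointwise) is already available, so this is a repairable slip rather than a gap in the result.
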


\begin{proof} 
By Proposition 5 of \cite{aswani2015} a point $y$ is an $\epsilon$-solution to the lower level problem if and only if there exists $\lambda$ such that the following inequalities are satisfied: $f(x,y) - \psi(\lambda,x) \leq \epsilon$, $g(x,y) \leq \epsilon$, $\lambda \geq 0$.  The result holds if we can show there exists $\lambda' \geq 0$ such that $f(x,y) - \psi(\lambda',x) \leq \epsilon$ if and only if there exists $\lambda'' \geq 0$ such that $f(x,y) - h_0(\lambda'',x) \leq \epsilon$.  Let $\lambda''\in\arg\max_\lambda\{h_0(\lambda,x)\ |\ \lambda \geq 0\}$, and note $f(x,y) - h_0(\lambda'',x) \leq f(x,y) - \psi(\lambda',x)$ by Theorem \ref{thm:eqrel}.  Similarly, let $\lambda' \in\arg\max_\lambda\{\psi(\lambda,x)\ |\ \lambda\geq 0\}$, and note $f(x,y) - \psi(\lambda',x) \leq f(x,y) - h_0(\lambda'',x)$ by Theorem \ref{thm:eqrel}.  Next recall there exists $\lambda$ such that $(x,y,\lambda) \in \mathcal{C}(0,0)$, and so $f(x,y) - h_\mu(\lambda,x) \leq f(x,y) - h_0(\lambda,x)\leq\epsilon$ since Proposition \ref{prop:epicon} shows $[-h]_\mu(\lambda, x)$ is decreasing. 
\end{proof}

%

Out next result is on the equivalence of solutions to \textsf{BLP}$(\epsilon)$ and \textsf{DBP}$(\epsilon,0)$.  A similar result was shown in \cite{dempe2012} for the \textsf{KKT} reformulation, but we cannot apply their results to our setting because feasible $\lambda$ for \textsf{DBP}$(\epsilon)$ are not necessarily Lagrange mutlipliers when $\epsilon > 0$.

\begin{proposition}
\label{prop:globmineq}
Suppose $\mathbf{A1},\mathbf{A2}$ and $\mathbf{R1}$ hold.  A point $(\overline{x},\overline{y})$ is a minimizer of $\mathsf{BLP}(\epsilon)$ if and only if for some feasible $\lambda \geq 0$ the point $(\overline{x},\overline{y}, \lambda)$ is a minimizer of $\mathsf{DBP}(\epsilon,0)$.
\end{proposition}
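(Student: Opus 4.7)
The plan is to leverage Proposition \ref{prop:epssol} as the pivotal bridge. That proposition already gives an ``if and only if'' characterization: $y$ is an $\epsilon$-solution of the lower level problem at $x$ exactly when there exists some $\lambda\geq 0$ with $(x,y,\lambda)\in\mathcal{C}(\epsilon,0)$. So the feasible set of $\mathsf{BLP}(\epsilon)$, namely $\{(x,y) : G(x)\le 0,\ y\in\epsilon\text{-}\arg\min\}$, is precisely the projection onto $(x,y)$-coordinates of the feasible set $\mathcal{C}(\epsilon,0)\cap\{G(x)\le 0\}$ of $\mathsf{DBP}(\epsilon,0)$. Since the objective $F(x,y)$ is common to both problems and does not involve $\lambda$, minimizers should transfer across this projection in both directions.

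For the forward direction, I would take $(\overline{x},\overline{y})$ to be a minimizer of $\mathsf{BLP}(\epsilon)$. Then $G(\overline{x})\leq 0$ and $\overline{y}$ is an $\epsilon$-solution of the lower level at $\overline{x}$, so Proposition \ref{prop:epssol} yields some $\lambda\geq 0$ with $(\overline{x},\overline{y},\lambda)\in\mathcal{C}(\epsilon,0)$. To show optimality of $(\overline{x},\overline{y},\lambda)$ in $\mathsf{DBP}(\epsilon,0)$, I would argue by contradiction: any strictly better feasible triple $(x',y',\lambda')\in\mathcal{C}(\epsilon,0)$ gives, again via Proposition \ref{prop:epssol}, that $(x',y')$ is feasible for $\mathsf{BLP}(\epsilon)$ with $F(x',y')<F(\overline{x},\overline{y})$, contradicting the original minimality. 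The reverse direction proceeds symmetrically: given a minimizer $(\overline{x},\overline{y},\lambda)$ of $\mathsf{DBP}(\epsilon,0)$, Proposition \ref{prop:epssol} certifies that $(\overline{x},\overline{y})$ is feasible for $\mathsf{BLP}(\epsilon)$, and any strictly better pair $(x',y')$ for $\mathsf{BLP}(\epsilon)$ can be lifted by Proposition \ref{prop:epssol} to some $(x',y',\lambda')\in\mathcal{C}(\epsilon,0)$, again contradicting optimality.

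There is no substantial obstacle here once Proposition \ref{prop:epssol} is in hand; the entire argument is essentially the standard principle that lifting a problem by an additional variable which (i) appears only in the constraints through a surjective projection and (ii) does not enter the objective preserves the global minimizer set under projection. The only point requiring modest care is to remember that in the ``only if'' lifting step, the multiplier $\lambda$ associated with $\overline{y}$ need not be unique, so the statement must be read as ``there exists a feasible $\lambda$'' rather than ``for every feasible $\lambda$'', which matches the wording of the proposition. No regularity beyond $\mathbf{A1}$, $\mathbf{A2}$, $\mathbf{R1}$ is needed, and the regularization parameter $\mu$ plays no role since the proposition is stated for $\mu=0$.
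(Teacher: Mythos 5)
Your proposal is correct and follows essentially the same route as the paper: both directions hinge on Proposition \ref{prop:epssol} to transfer feasibility between $\mathsf{BLP}(\epsilon)$ and $\mathsf{DBP}(\epsilon,0)$, combined with the observation that the shared objective $F(x,y)$ does not involve $\lambda$. The paper phrases the argument contrapositively (``not a global minimum iff not a global minimum'') while you argue by direct contradiction, but the substance is identical.
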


\begin{proof} 
We prove this by showing $(x',y')$ is not a global minimum of \textsf{BLP}$(\epsilon)$ if and only if $(x',y',\lambda')$ is not a global minimum of \textsf{DBP}$(\epsilon,0)$ for some feasible $\lambda'\geq0$.  Suppose $(x',y')$ is not a global minimum of \textsf{BLP}$(\epsilon)$.  Then there exists $(x,y)$ feasible for \textsf{BLP}$(\epsilon)$, and with $F(x,y) < F(x',y')$.  By Proposition \ref{prop:epssol}, there exists $\lambda \geq 0$ such that $(x,y,\lambda)$ is feasible for \textsf{DBP}$(\epsilon,0)$, which implies $(x',y',\lambda')$ is not a global minimum of \textsf{DBP}$(\epsilon,0)$.  Similarly, suppose $(x',y',\lambda')$ is not a global minimum of \textsf{DBP}$(\epsilon,0)$.  Then there exists $(x,y,\lambda)$ feasible for \textsf{DBP}$(\epsilon,0)$, and such that $F(x,y) < F(x',y')$.  However, this $(x,y)$ is feasible for \textsf{BLP}$(\epsilon)$ by Proposition \ref{prop:epssol}.  Thus $(x',y')$ is not a global minimum of \textsf{BLP}$(\epsilon)$.
\end{proof}


The issue of equivalence between local minimizers of \textsf{BLP}$(\epsilon)$ and \textsf{DBP}$(\epsilon,0)$ is more complex.  The \textsf{KKT} reformulation generally lacks such an equivalence \cite{dempe2012}, and \cite{dempe2012} argues that assuming \textsf{LICQ} for the lower level problem provides equivalence of local minimizers since this ensures uniqueness (and hence continuity) of the Largrange multipliers \cite{wachsmuth2013}.  However, results for the \textsf{KKT} reformulation \cite{dempe2012} cannot be applied to our setting because feasible $\lambda$ for \textsf{DBP}$(\epsilon,0)$ are not necessarily Lagrange mutlipliers.  

\begin{proposition}
\label{prop:lseq}
Suppose $\mathbf{A1},\mathbf{A2}$ and $\mathbf{R1}$ hold.  If $\epsilon > 0$, or $g(x,y)$ satisfies $\mathsf{LICQ}$ for each $x \in X$; then a point $(\overline{x},\overline{y})$ is a local minimum of $\mathsf{BLP}(\epsilon)$ if and only if for some feasible $\lambda \geq 0$ the point $(\overline{x},\overline{y}, \lambda)$ is a local minimum of $\mathsf{DBP}(\epsilon,0)$.
\end{proposition}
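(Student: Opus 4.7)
The plan is to follow the structure of Proposition~\ref{prop:globmineq}, using Proposition~\ref{prop:epssol} to translate between BLP$(\epsilon)$-feasibility and DBP$(\epsilon,0)$-feasibility. The forward implication will need neither $\epsilon>0$ nor LICQ; the side hypotheses are consumed entirely in the backward direction to control how the multiplier $\lambda$ moves as $(x,y)$ is perturbed.

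For the forward direction, I would take a witnessing neighborhood $V$ of $(\bar{x},\bar{y})$ for local minimality in BLP$(\epsilon)$, and use Proposition~\ref{prop:epssol} to choose $\lambda\geq 0$ with $(\bar{x},\bar{y},\lambda)\in\mathcal{C}(\epsilon,0)$. Setting $U=V\times\mathbb{R}^p$ as the DBP-neighborhood, every $(x,y,\lambda')\in U\cap\mathcal{C}(\epsilon,0)$ satisfies $(x,y)\in V$ and is BLP$(\epsilon)$-feasible by Proposition~\ref{prop:epssol}, so $F(x,y)\geq F(\bar{x},\bar{y})$. This establishes local minimality of $(\bar{x},\bar{y},\lambda)$ in DBP$(\epsilon,0)$.

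For the backward direction, I would suppose $(\bar{x},\bar{y},\lambda)$ is a local minimizer of DBP$(\epsilon,0)$ on some neighborhood $U$ and argue by contradiction: if $(\bar{x},\bar{y})$ were not a local minimizer of BLP$(\epsilon)$, there would be a sequence $(x^\nu,y^\nu)\to(\bar{x},\bar{y})$ feasible for BLP$(\epsilon)$ with $F(x^\nu,y^\nu)<F(\bar{x},\bar{y})$. The core task is to build $\lambda^\nu\to\lambda$ with $(x^\nu,y^\nu,\lambda^\nu)\in\mathcal{C}(\epsilon,0)$, since then the triple lies in $U\cap\mathcal{C}(\epsilon,0)$ for large $\nu$ and contradicts local optimality at $(\bar{x},\bar{y},\lambda)$. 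Under LICQ (with $\epsilon=0$), feasibility of $(\bar{x},\bar{y},\lambda)$ forces $\lambda$ to coincide with the unique Lagrange multiplier, which depends continuously on $x$ by \cite{wachsmuth2013}; choosing $\lambda^\nu$ to be that multiplier at $x^\nu$ yields $\lambda^\nu\to\lambda$. Under $\epsilon>0$, if $f(\bar{x},\bar{y})-h_0(\lambda,\bar{x})<\epsilon$ strictly, continuity of $h_0(\lambda,\cdot)$ (Berge's theorem, as noted in the proof of Proposition~\ref{prop:epicon}) makes $\lambda^\nu=\lambda$ admissible for large $\nu$; if the constraint is tight, I would use the convex combination $\lambda^\nu=(1-t_\nu)\lambda+t_\nu\tilde{\lambda}^\nu$ with $\tilde{\lambda}^\nu\in\arg\max_\mu h_0(\mu,x^\nu)$, exploiting concavity of $h_0(\cdot,x^\nu)$ to generate slack of order $t_\nu$ and taking $t_\nu\to 0$.

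The main obstacle is this tight-constraint sub-case of the $\epsilon>0$ scenario: the perturbation of $\lambda$ must be small enough that $\lambda^\nu\to\lambda$ but large enough to absorb the $O(\|(x^\nu,y^\nu)-(\bar{x},\bar{y})\|)$ disturbance of the dual-feasibility inequality. The balance rests on the first-order slack produced by concavity of $h_0(\cdot,x)$ together with continuity of $\varphi(x)=\max_\mu h_0(\mu,x)$, which together let $t_\nu$ be chosen to decrease strictly slower than the $(x,y)$-perturbation while still vanishing.
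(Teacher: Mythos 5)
Your forward direction and your treatment of the LICQ case track the paper's proof essentially verbatim (the paper also argues via the contrapositive, and also gets the $\epsilon=0$ case from uniqueness and continuity of the multiplier via \cite{wachsmuth2013}). The divergence, and the gap, is in the backward direction for $\epsilon>0$ when the constraint $f(x,y)-h_0(\lambda,x)\le\epsilon$ is tight. Quantify your convex combination: with $\lambda^\nu=(1-t_\nu)\lambda+t_\nu\tilde\lambda^\nu$ and $h_0(\tilde\lambda^\nu,x^\nu)=\varphi(x^\nu)$, concavity of $h_0(\cdot,x^\nu)$ gives
$f(x^\nu,y^\nu)-h_0(\lambda^\nu,x^\nu)\le \epsilon+(1-t_\nu)\delta_\nu-t_\nu e_\nu$,
where $\delta_\nu=f(x^\nu,y^\nu)-h_0(\lambda,x^\nu)-\epsilon$ is the disturbance to be absorbed and $e_\nu=\epsilon-\bigl(f(x^\nu,y^\nu)-\varphi(x^\nu)\bigr)\ge 0$. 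So the slack you generate is of order $t_\nu e_\nu$, not $t_\nu$. If $\lambda$ is dual-suboptimal at $\overline{x}$, i.e. $\varphi(\overline{x})-h_0(\lambda,\overline{x})>0$, then tightness forces $f(\overline{x},\overline{y})-\varphi(\overline{x})<\epsilon$, hence $e_\nu$ is bounded below by a positive constant and your choice $t_\nu$ proportional to $\max(0,\delta_\nu)$ works. But the tight case also contains the configuration where $\lambda$ is an exact dual maximizer and $\overline{y}$ is exactly $\epsilon$-suboptimal, so that $f(\overline{x},\overline{y})-\varphi(\overline{x})=\epsilon$. Then $e_\nu\to 0$ with no control relative to $\delta_\nu$; worse, if $f(x^\nu,y^\nu)-\varphi(x^\nu)=\epsilon$ along the sequence, the only multipliers feasible at $(x^\nu,y^\nu)$ are exact dual maximizers at $x^\nu$, and what you actually need is inner semicontinuity of the dual solution map --- precisely the property LICQ is invoked to supply in the other case, and which no amount of mixing toward $\tilde\lambda^\nu$ can manufacture. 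Your closing claim that $t_\nu$ can ``decrease strictly slower than the $(x,y)$-perturbation'' tacitly assumes $e_\nu$ is bounded away from zero, which fails here.

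The paper sidesteps this by never working in the $\lambda$-slice with $y$ frozen at $y^\nu$: it studies the joint correspondence $\Phi_{\epsilon,\mu}(x)=\{(y,\lambda):(x,y,\lambda)\in\mathcal{C}(\epsilon,\mu)\}$, exhibits a Slater point in the joint $(y,\lambda)$ variables (an exact lower-level minimizer $\overline{y}$ with $g(x,\overline{y})\le 0<\epsilon$ together with a nearby $\underline{\lambda}'>0$ satisfying $f(x,\overline{y})-h_\mu(\underline{\lambda}',x)<\epsilon$), and then uses convexity of $\Phi_{\epsilon,\mu}(x)$ and Example 5.10 of \cite{rockafellar2009} to conclude continuity of the correspondence. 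The essential point is that the Slater condition holds for the convex system in $(y,\lambda)$ jointly even though it can fail for the system in $\lambda$ alone at a boundary pair $(x^\nu,y^\nu)$. To repair your argument you would need to perturb $(y,\lambda)$ jointly toward such a Slater point rather than perturbing $\lambda$ only, or else restrict the problematic sub-case away by an additional hypothesis.
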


\begin{proof} 
We show $(x',y')$ is not a local minimum of \textsf{BLP}$(\epsilon)$ if and only if $(x',y',\lambda')$ is not a local minimum of \textsf{DBP}$(\epsilon,0)$ for some feasible $\lambda'\geq0$.  First suppose $(x',y',\lambda')$ is not a local minimum of \textsf{DBP}$(\epsilon,0)$.  Then there exists a feasible sequence $(x^\nu,y^\nu,\lambda^\nu)\rightarrow(x',y',\lambda')$ with $F(x^\nu,y^\nu) < F(x',y')$, where $(x^\nu,y^\nu)$ is feasible for \textsf{BLP}$(\epsilon)$ by Proposition \ref{prop:epssol}.  This shows $(x',y')$ is not a local minimum of \textsf{BLP}$(\epsilon)$.  To prove the other direction, suppose $(x',y')$ is not a local minimum of \textsf{BLP}$(\epsilon)$.  Then there exists a sequence of feasible $(x^\nu,y^\nu)\rightarrow(x,y)$ with $F(x^\nu,y^\nu) < F(x',y')$.  We must now consider two cases.

The first case is when $\epsilon > 0$.  Define $\Phi_{\epsilon,\mu}(x) = \{(y,\lambda) : (x,y,\lambda)\in\mathcal{C}(\epsilon,\mu)\}$.  For each $x \in X$, choosing $\overline{y}$ to be a solution to the lower level problem (which exists by Example 1.11 of \cite{rockafellar2009} and Theorem 2.165 of \cite{bonnans2000}) gives a corresponding $\underline{\lambda}$ (by Proposition \ref{prop:epssol}) that satisfies $f(x,\overline{y}) - h_0(x,\underline{\lambda}) \leq 0 < \epsilon$.  But $h_\mu$ is decreasing in $\mu$ (Proposition \ref{prop:epicon}), and so we have $f(x,\overline{y}) - h_\mu(x,\underline{\lambda}) < \epsilon$.  Since $h_\mu$ is continuous by Proposition \ref{prop:hmulc2}, this means we can choose $\underline{\lambda}' > 0$ such that $f(x,\overline{y}) - h_\mu(x,\underline{\lambda}') < \epsilon$.  Combining this with \textbf{A1}, Proposition \ref{prop:hmulc2}, the convexity of \textsf{DBP}$(\epsilon,\mu)$ when $x$ is fixed, and Example 5.10 of \cite{rockafellar2009} shows $\Phi_{\epsilon,\mu}$ is continuous when $\epsilon > 0$.  Hence there exists a sequence $\lambda^\nu\rightarrow\lambda'$ with $(x^\nu,y^\nu,\lambda^\nu)$ feasible for \textsf{DBP}$(\epsilon,0)$.  This implies that the point $(x',y',\lambda')$ is not a local minimum of \textsf{DBP}$(\epsilon,0)$.  

The second case is when $\epsilon = 0$ and \textsf{LICQ} holds.  Theorem \ref{thm:eqrel} and Corollary \ref{cor:sadpoint} imply $\Phi_{0,0}(x)$ consists of saddle points to the Lagrangian $\mathcal{L}$, and hence satisfy the \textsf{KKT} conditions (see Corollary 11.51 of \cite{rockafellar2009}) because of the constraint qualification in \textbf{R1}.  So there is a unique $\lambda'(x)$ that makes $(x',y',\lambda'(x))$ feasible for \textsf{DBP}$(\epsilon,0)$ \cite{wachsmuth2013}. By Corollary \ref{cor:sadpoint} we have $\lambda'(x) \in \arg\max_\lambda h_0(\lambda,x)$, and so $\lambda'(x)$ is a continuous function since it is single-valued \cite{wachsmuth2013} and osc by the Berge maximum theorem \cite{berge1963}.  Hence there exists $\lambda^\nu\rightarrow\lambda'(x)$ with $(x^\nu,y^\nu,\lambda^\nu)$ feasible for \textsf{DBP}$(\epsilon,0)$.  This implies $(x',y',\lambda')$ is not a local minimum of \textsf{DBP}$(\epsilon,0)$.
\end{proof}


\subsection{Constraint Qualification}

One difficulty with solving bilevel programs is reformulations do not satisfy constraint qualification \cite{ye1995,scholtes2001,scheel2000}.  The issue is not that the feasible region of a bilevel program usually has no interior, but rather that an inequality representing optimality must fundamentally violate constraint qualification since we can interpret constraint qualification as stating the constraints have no local optima \cite{polak1997}.  However, one benefit of our regularization is it leads to constraint qualification of the regularized problem \textsf{DBP}$(\epsilon,\mu)$.  


\begin{theorem}
\label{thm:mfcq}
Suppose $\mathbf{A1}$--$\mathbf{A3}$ and $\mathbf{R1},\mathbf{R2}$ hold.  If $\epsilon > 0$, then $\mathsf{MFCQ}$ holds for $\mathsf{DBP}(\epsilon,\mu)$.
\end{theorem}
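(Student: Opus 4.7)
The plan is to exhibit, at any feasible point $(\overline{x},\overline{y},\overline{\lambda}) \in \mathcal{C}(\epsilon,\mu)$, a single direction $d = (d_x,d_y,d_\lambda)$ whose inner product with every subgradient of each active constraint is strictly negative. I would assemble $d$ from three pieces: (i) an MFCQ direction $\overline{d}_x$ for $G$ at $\overline{x}$, guaranteed by $\mathbf{R2}$; (ii) an exact lower-level solution $\widetilde{y}\in s(\overline{x})$, which exists under $\mathbf{A1},\mathbf{A2},\mathbf{R1}$, giving strict slack $g(\overline{x},\widetilde{y}) \leq 0 < \epsilon$; and (iii) a strictly positive multiplier $\widetilde{\lambda} > 0$ with $f(\overline{x},\widetilde{y}) - h_{\mu}(\widetilde{\lambda},\overline{x}) < \epsilon$. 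The existence of $\widetilde{\lambda}$ mirrors the continuity argument in the proof of Proposition~\ref{prop:lseq}: Proposition~\ref{prop:epssol} yields some $\lambda \geq 0$ with $f(\overline{x},\widetilde{y}) - h_0(\lambda,\overline{x}) \leq 0$; monotonicity of $h_\mu$ in $\mu$ (Proposition~\ref{prop:epicon}) upgrades this to $f(\overline{x},\widetilde{y}) - h_\mu(\lambda,\overline{x}) \leq 0$; and continuity of $h_\mu$ from Proposition~\ref{prop:hmulc2} allows a perturbation to $\widetilde{\lambda} > 0$ with the strict inequality, since $\epsilon > 0$ leaves room.

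With $d = (\alpha\overline{d}_x, \widetilde{y}-\overline{y}, \widetilde{\lambda}-\overline{\lambda})$ for a small scalar $\alpha > 0$ to be chosen at the end, I would verify MFCQ constraint-class by constraint-class. Active $G_i(x) \leq 0$ constraints give $\alpha \nabla G_i(\overline{x})^{\textsf{T}}\overline{d}_x < 0$ directly. Active $-\lambda_k \leq 0$ constraints give $\widetilde{\lambda}_k - 0 > 0$. Active $g_j(x,y) \leq \epsilon$ constraints reduce, by convexity of $g_j$ in $y$ from $\mathbf{A1}$, to $\nabla_y g_j(\overline{x},\overline{y})^{\textsf{T}}(\widetilde{y}-\overline{y}) \leq g_j(\overline{x},\widetilde{y}) - g_j(\overline{x},\overline{y}) \leq -\epsilon$, plus an $x$-term of order $\alpha$ that is absorbed for $\alpha$ small.

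The main obstacle, and the most delicate step, is the nonsmooth constraint $\phi := f - h_\mu \leq \epsilon$, where MFCQ must hold for every element of the subgradient, parameterized by $\overline{y}'' \in \sigma_\mu(\overline{\lambda},\overline{x})$ via Proposition~\ref{prop:hmulc2}. I would handle this through the envelope inequality
\[
h_\mu(\widetilde{\lambda},\overline{x}) - h_\mu(\overline{\lambda},\overline{x}) \leq (\widetilde{\lambda}-\overline{\lambda})^{\textsf{T}} g(\overline{x},\overline{y}''),
\]
which holds because $\overline{y}''\in\sigma_\mu(\overline{\lambda},\overline{x})\subseteq Y$ attains the minimum defining $h_\mu(\overline{\lambda},\overline{x})$ but is only feasible (not necessarily optimal) for the minimization defining $h_\mu(\widetilde{\lambda},\overline{x})$. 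Combining this with convexity of $f$ in $y$ shows that the $(y,\lambda)$-component of any extreme subgradient paired against $(\widetilde{y}-\overline{y},\widetilde{\lambda}-\overline{\lambda})$ is bounded above by $[f(\overline{x},\widetilde{y}) - h_\mu(\widetilde{\lambda},\overline{x})] - [f(\overline{x},\overline{y}) - h_\mu(\overline{\lambda},\overline{x})] < \epsilon - \epsilon = 0$, uniformly in $\overline{y}''$. Since $\sigma_\mu(\overline{\lambda},\overline{x})\subseteq Y$ is compact, the residual $x$-contribution $\alpha\bigl[\nabla_x f(\overline{x},\overline{y}) - \nabla_x f(\overline{x},\overline{y}'') - \overline{\lambda}^{\textsf{T}}\nabla_x g(\overline{x},\overline{y}'')\bigr]^{\textsf{T}}\overline{d}_x$ is uniformly bounded over $\overline{y}''$, so shrinking $\alpha$ restores strict negativity across all subgradients simultaneously; passage from extreme points to convex combinations is automatic.

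Taking the most restrictive $\alpha$ across the four constraint classes yields an MFCQ direction at $(\overline{x},\overline{y},\overline{\lambda})$, and since this point was arbitrary the conclusion follows. The essential use of $\epsilon > 0$ is twofold: it creates the strict slack $g_j(\overline{x},\widetilde{y}) < \epsilon$ that drives the third class, and it is precisely what allows the promotion from a plain dual multiplier $\lambda \geq 0$ to a strictly positive $\widetilde{\lambda} > 0$ while preserving $f(\overline{x},\widetilde{y}) - h_\mu(\widetilde{\lambda},\overline{x}) < \epsilon$ needed for the fourth.
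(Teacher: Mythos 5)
Your proof is correct and follows essentially the same route as the paper's: both hinge on the strict gap $f(\overline{x},\widetilde{y})-h_\mu(\widetilde{\lambda},\overline{x})<\epsilon$ made available by $\epsilon>0$, the convexity/envelope inequality for $f-h_\mu$ in $(y,\lambda)$ applied to the subgradient formula of Proposition~\ref{prop:hmulc2}, and a Cauchy--Schwarz shrinking of the $x$-component of the direction. Your version is in fact slightly tidier: taking $\widetilde{y}-\overline{y}$ as the $y$-direction for both the $g_j\le\epsilon$ and the duality-gap constraints, and perturbing the dual maximizer to a strictly positive $\widetilde{\lambda}$, lets a single direction serve every constraint class, whereas the paper keeps a separate Slater direction $d_y$ for the $g_j$ constraints and patches the active $-\lambda_k\le 0$ constraints with an extra correction term $\gamma' d_\lambda$.
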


\begin{proof}
Consider any $(x,y,\lambda)$ feasible for \textsf{DBP}$(\epsilon,\mu)$.  Note some subset of the constraints $g(x,y) \leq \epsilon$, $G(x) \leq 0$, and $\lambda\geq0$ may be active, and label the indices of the active constraints by $I, J, K$.  Slater's condition holds for $g(x,y)\leq\epsilon$ by \textbf{R1}, \textsf{MFCQ} holds for $G(x)\leq0$ by \textbf{R2}, and Slater's condition holds for $-\lambda\leq 0$ since it clearly has an interior.  Since Slater's condition is equivalent to \textsf{MFCQ} for convex sets \cite{rockafellar2009}, there exists $d_x,d_y,d_\lambda$ such that $\nabla_xG_i(x)^\textsf{T}d_x < 0$, $\nabla_y g_j(x,y)^\textsf{T} d_y< 0$, and $\nabla_\lambda [-\lambda]_k^\textsf{T}d_\lambda< 0$ for active constraints $i\in I$, $j\in J$, and $k\in K$.

Next, we consider two sub-cases. The first sub-case has $f(x,y) - h_\mu(\lambda,x) < \epsilon$, which means this constraint cannot be active.  Note we can choose $\gamma > 0$ small enough to ensure $\gamma \nabla_x G_i(x)^{\textsf{T}}d_x < 0$ and $\gamma \nabla_x g_j(x,y)^\textsf{T}d_x + \nabla_y g_j(x,y)^\textsf{T}d_y < 0$ for $i\in I$ and $j\in J$, since by the Cauchy-Schwartz inequality we have $\gamma \nabla_x g_j(x,y)^\textsf{T}d_x \leq \gamma\cdot\|\nabla_x g_j(x,y)\|\cdot\|d_x\|$.  Thus, \textsf{MFCQ} holds in this sub-case.  In the second sub-case, $f(x,y) - h_\mu(\lambda,x) = \epsilon$.  Let $y^* \in \arg\min\{f(x,y)\ |\ g(x,y)\leq 0\}$ and $\lambda^* \in \arg\max\{h(\lambda,x)\ |\ \lambda \geq 0\}$, and note Corollary \ref{cor:sadpoint} gives $f(x,y^*) - h_0(\lambda^*,x) \leq 0$.  This implies $f(x,y^*) - h_0(\lambda^*,x) < \epsilon$.  But recall Proposition \ref{prop:epicon} gives that $[-h]_\mu(\lambda, x)$ is decreasing in $\mu$, and so we have $f(x,y^*) - h_\mu(\lambda^*,x) \leq f(x,y^*) - h_0(\lambda^*,x) < \epsilon = f(x,y) - h_\mu(\lambda,x).$ Consider any $v_x \in \partial_x[-h]_\mu(\lambda,x)$ and $v_\lambda \in \partial_\lambda[-h]_\mu(\lambda,x)$, where existence and boundedness of the subgradient comes from Proposition \ref{prop:hmulc2}.  Observe that $f(x,y) - h_\mu(\lambda,x)$ is convex in $y,\lambda$, and by its convexity we have $\nabla_y f(x,y)^\textsf{T}(y^* - y) + v_\lambda^\textsf{T}(\lambda^*-\lambda) \leq f(x,y^*) - h_\mu(\lambda^*,x) - f(x,y) + h_\mu(\lambda,x) < 0$.  Since the subgradient of $[-h]_\mu$ is bounded, by the Cauchy-Schwarz inequality we can choose $\gamma > 0$ small enough to ensure $\gamma \nabla_x G_i(x)^\textsf{T}d_x < 0$, $\gamma \nabla_x g_j(x,y)^\textsf{T}d_x + \nabla_y g_j(x,y)^\textsf{T}d_y < 0$, and $\gamma (\nabla_x f(x,y) + v_x)^\textsf{T}d_x + \nabla_y f(x,y)^\textsf{T}(y^* - y) + v_\lambda^\textsf{T}(\lambda^*-\lambda) < 0$, for $i\in I$ and $j\in J$.  We next compute $\nabla_\lambda [-\lambda]_k^\textsf{T}(\lambda^*-\lambda)$ for $k\in K$.  If $-\lambda_k \leq 0$ is active,  then $\lambda_k = 0$ and $\lambda^*_k - \lambda_k \geq 0$ because $\lambda^*_k \geq 0$.  Thus, $\nabla_\lambda [-\lambda]_k^\textsf{T}(\lambda^*-\lambda) \leq 0$ for $k\in K$.  Next, note we can choose $\gamma'$ such that $\nabla_\lambda [-\lambda]_k^\textsf{T}(\lambda^*-\lambda) + \gamma'\nabla_\lambda [-\lambda]_k^\textsf{T}d_\lambda < 0$ for $k\in K$ and  $\gamma(\nabla_x f(x,y) + v_x)^\textsf{T}d_x + \nabla_y f(x,y)^\textsf{T}(y^* - y) + v_\lambda^\textsf{T}(\lambda^*-\lambda) + \gamma'v_\lambda^\textsf{T}d_y < 0$.  So \textsf{MFCQ} holds in this sub-case.
\end{proof}

\subsection{Consistency of Approximation}


We show the regularized problems \textsf{DBP}$(\epsilon,\mu)$ are consistent approximations \cite{polak1997,royset2016} of the limiting problem \textsf{DBP}$(\overline{\epsilon},0)$ under appropriate conditions.  Our first result concerns convergence of the constraint sets $\mathcal{C}(\epsilon,\mu)$, which leads as a corollary to convergence of optimizers of the regularized problems to optimizers of the limiting problem.

\begin{proposition}
\label{prop:convcons}
Suppose $\mathbf{A1}$--$\mathbf{A3}$ and $\mathbf{R1}$ hold.  Then for any $\overline{\epsilon} \geq 0$ we have that $\lim_{\epsilon\downarrow\overline{\epsilon},\mu\downarrow0} \mathcal{C}(\epsilon,\mu) = \mathcal{C}(\overline{\epsilon},0)$, and $\mathcal{C}(\epsilon_1,\mu_1) \supseteq \mathcal{C}(\epsilon_2,\mu_2)$ whenever $\epsilon_1 \geq \epsilon_2$ and $\mu_1\geq\mu_2$.
\end{proposition}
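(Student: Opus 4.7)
My plan is to separately establish the monotonicity statement and the set-convergence statement, and to derive the former first so it can be used in the latter.

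For monotonicity, I would take an arbitrary $(x,y,\lambda)\in\mathcal{C}(\epsilon_2,\mu_2)$ and verify each defining inequality of $\mathcal{C}(\epsilon_1,\mu_1)$. The constraints $G(x)\leq 0$ and $\lambda\geq 0$ are identical in both sets; the inequality $g(x,y)\leq\epsilon_2$ trivially gives $g(x,y)\leq\epsilon_1$. The only non-trivial step is $f(x,y)-h_{\mu_1}(\lambda,x)\leq\epsilon_1$, which follows by chaining $f(x,y)-h_{\mu_1}(\lambda,x)\leq f(x,y)-h_{\mu_2}(\lambda,x)\leq\epsilon_2\leq\epsilon_1$, where the first inequality uses that $[-h]_\mu$ is decreasing in $\mu$ (Proposition \ref{prop:epicon}).

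For the set convergence, I would show separately that the inner and outer Painlev\'e--Kuratowski limits both equal $\mathcal{C}(\overline{\epsilon},0)$. The inner limit is the easy direction: given any $(x,y,\lambda)\in\mathcal{C}(\overline{\epsilon},0)$, the monotonicity just proved ensures $(x,y,\lambda)\in\mathcal{C}(\epsilon,\mu)$ for every $\epsilon\geq\overline{\epsilon}$ and $\mu\geq 0$, so the constant sequence witnesses membership in the inner limit. For the outer limit, I would take any sequence $(x^\nu,y^\nu,\lambda^\nu)\to(x,y,\lambda)$ with $(x^\nu,y^\nu,\lambda^\nu)\in\mathcal{C}(\epsilon^\nu,\mu^\nu)$ for $\epsilon^\nu\downarrow\overline{\epsilon}$ and $\mu^\nu\downarrow 0$. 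Continuity of $G$ and $g$ (from \textbf{A1}, \textbf{A3}) and closedness of the non-negative orthant immediately pass the three simple constraints to the limit, leaving only the dual inequality $f(x,y)-h_0(\lambda,x)\leq\overline{\epsilon}$ to verify.

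The main obstacle is handling this last inequality because the dual function itself is evaluated at a varying regularization parameter $\mu^\nu$ as well as at varying $(\lambda^\nu,x^\nu)$. The key technical step is to show that $h_{\mu^\nu}(\lambda^\nu,x^\nu)\to h_0(\lambda,x)$. I would squeeze this as follows: by monotonicity of $h_\mu$ in $\mu$ (Proposition \ref{prop:epicon}), for any fixed $\mu'>0$ we have eventually $h_0(\lambda^\nu,x^\nu)\leq h_{\mu^\nu}(\lambda^\nu,x^\nu)\leq h_{\mu'}(\lambda^\nu,x^\nu)$. For each fixed $\mu\geq 0$, Berge's maximum theorem applied to the minimization over the compact set $Y$ yields continuity of $h_\mu$, so the outer terms converge to $h_0(\lambda,x)$ and $h_{\mu'}(\lambda,x)$ respectively. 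Letting $\mu'\downarrow 0$ and invoking the pointwise limit $\lim_{\mu'\downarrow 0}h_{\mu'}(\lambda,x)=h_0(\lambda,x)$ (also from Proposition \ref{prop:epicon}) collapses the squeeze to $h_0(\lambda,x)$.

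Finally, combining this with continuity of $f$ and $\epsilon^\nu\to\overline{\epsilon}$, I would pass to the limit in $f(x^\nu,y^\nu)-h_{\mu^\nu}(\lambda^\nu,x^\nu)\leq\epsilon^\nu$ to obtain $f(x,y)-h_0(\lambda,x)\leq\overline{\epsilon}$, completing membership in $\mathcal{C}(\overline{\epsilon},0)$ and hence the outer-limit inclusion. The two inclusions together yield the desired set-convergence identity.
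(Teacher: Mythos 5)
Your proof is correct, and the monotonicity half is essentially identical to the paper's (chain $f(x,y)-h_{\mu_1}(\lambda,x)\leq f(x,y)-h_{\mu_2}(\lambda,x)\leq\epsilon_2\leq\epsilon_1$ via Proposition~\ref{prop:epicon}). For the set-convergence half, however, you take a genuinely different and more self-contained route. The paper observes that each $\mathcal{C}(\epsilon,\mu)$ is closed (using \textbf{A1}, \textbf{A3}, and the continuity of $h_\mu$ from Proposition~\ref{prop:hmulc2}) and then invokes the monotone-sequence result for set convergence (Exercise~4.3.b of \cite{rockafellar2009}), which identifies the limit of a nested decreasing family of closed sets with their intersection; the identification of that intersection with $\mathcal{C}(\overline{\epsilon},0)$ is left implicit. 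You instead verify the inner and outer Painlev\'e--Kuratowski limits directly: the inner limit via the constant sequence (using monotonicity), and the outer limit by passing each constraint to the limit along a convergent feasible sequence. The payoff of your approach is that it makes explicit the one step the paper's citation-based argument glosses over, namely that $h_{\mu^\nu}(\lambda^\nu,x^\nu)\to h_0(\lambda,x)$ when the regularization parameter and the base point vary simultaneously; your squeeze $h_0(\lambda^\nu,x^\nu)\leq h_{\mu^\nu}(\lambda^\nu,x^\nu)\leq h_{\mu'}(\lambda^\nu,x^\nu)$ combined with Berge continuity of each fixed $h_\mu$ and the pointwise limit $h_{\mu'}\downarrow h_0$ handles this cleanly. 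The cost is length: the paper's appeal to the general monotone-convergence fact is shorter and automatically delivers existence of the limit. Both arguments rest on the same two ingredients (nestedness from Proposition~\ref{prop:epicon} and continuity of the dual functions), so the proofs are logically interchangeable.
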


\begin{proof}
For any $(x,y,\lambda)\in\mathcal{C}(\epsilon_2,\mu_2)$, we have: $G(x) \leq 0$, $f(x,y) - h_{\mu_2}(\lambda,x) \leq \epsilon_2$, $g(x,y) \leq \epsilon_2$, and $\lambda \geq 0$.  Proposition \ref{prop:epicon} shows $[-h]_\mu(\lambda, x)$ is strictly decreasing in $\mu$, and so $f(x,y) - h_{\mu_1}(\lambda,x) \leq f(x,y) - h_{\mu_2}(\lambda,x) \leq\epsilon_2\leq\epsilon_1$.  Similarly, $g(x,y) \leq \epsilon_1\leq\epsilon_2$.  This shows $(x,y,\lambda)\in\mathcal{C}(\epsilon_1,\mu_1)$, which proves $\mathcal{C}(\epsilon_1,\mu_1) \supseteq \mathcal{C}(\epsilon_2,\mu_2)$ whenever $\epsilon_1 \geq \epsilon_2$ and $\mu_1\geq\mu_2$.  But $\mathcal{C}(\epsilon,\mu)$ is closed since $f,g,G$ are differentiable by \textbf{A1},\textbf{A3}; and $h_\mu$ is continuous by Proposition \ref{prop:hmulc2}. So the result follows by Exercise 4.3.b of \cite{rockafellar2009}.
\end{proof}

\begin{corollary}
Suppose $\mathbf{A1}$--$\mathbf{A3}$ and $\mathbf{R1}$,$\mathbf{R2}$ hold. If we have that $\epsilon\downarrow\overline{\epsilon},\mu\downarrow0,z\downarrow 0$, then
\begin{multline}
\limsup_{\epsilon\downarrow\overline{\epsilon},\mu\downarrow0,z\downarrow 0} \big(z\text{-}\arg\min \mathsf{DBP}(\epsilon,\mu)\big) \subseteq \\\vspace{-0.5cm}\arg\min \mathsf{DBP}(\overline{\epsilon},0),
\end{multline}
and $z\text{-}\min \mathsf{DBP}(\epsilon,\mu) \rightarrow \min \mathsf{DBP}(\overline{\epsilon},0)$.
\end{corollary}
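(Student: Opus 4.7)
The plan is to combine the set-convergence and monotone-inclusion properties of $\mathcal{C}(\epsilon,\mu)$ from Proposition \ref{prop:convcons} with continuity of $F$ (from $\mathbf{A3}$) and with a uniform compactness argument on the feasible set, then extract subsequential limits in the style of a standard consistency-of-approximation proof.

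First I would verify that the feasible sets are contained in a fixed compact set uniformly in $(\epsilon,\mu)$ near $(\overline{\epsilon},0)$. The $x$-component lies in $X$, which is compact by $\mathbf{R2}$; the $y$-component lies in $Y$, which is compact by $\mathbf{A2}$, since $g(x,y)\leq\epsilon$ forces $y$ to remain inside $Y$ for $\epsilon$ small enough (as the zero-sublevel set lies strictly inside $\operatorname{int}(Y)$). The $\lambda$-component requires more care: using $\mathbf{R1}$ together with continuity of $g$ and compactness of $X$ (via a Berge/finite-cover argument), I would produce a selection $\widetilde{y}:X\to Y$ and a uniform gap $\delta>0$ with $g_i(x,\widetilde{y}(x))\leq -\delta$ for all $i$ and all $x\in X$. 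Plugging $\widetilde{y}(x)$ into the definition of $h_\mu$ gives $h_\mu(\lambda,x)\leq \mu\|\widetilde{y}(x)\|^2+f(x,\widetilde{y}(x))-\delta\sum_i\lambda_i$, so the feasibility constraint $f(x,y)-h_\mu(\lambda,x)\leq\epsilon$ forces $\sum_i\lambda_i$ to be bounded uniformly over $(x,y)$ in the compact ranges above and over small $(\epsilon,\mu)$. Hence $\bigcup_{(\epsilon,\mu)\in[0,\epsilon_0]\times[0,\mu_0]}\mathcal{C}(\epsilon,\mu)$ is bounded.

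Second, for the limsup inclusion, I would take any sequence $(\epsilon_k,\mu_k,z_k)\downarrow(\overline{\epsilon},0,0)$ and $(x_k,y_k,\lambda_k)\in z_k\text{-}\arg\min\mathsf{DBP}(\epsilon_k,\mu_k)$. By the uniform compactness above, pass to a convergent subsequence $(x_k,y_k,\lambda_k)\to(x^*,y^*,\lambda^*)$. Proposition \ref{prop:convcons} combined with the closedness of $\mathcal{C}(\overline{\epsilon},0)$ (inherited from continuity of $h_\mu$ shown in Proposition \ref{prop:hmulc2} and smoothness of $f,g,G$ in $\mathbf{A1},\mathbf{A3}$) implies $(x^*,y^*,\lambda^*)\in\mathcal{C}(\overline{\epsilon},0)$. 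For any competitor $(x',y',\lambda')\in\mathcal{C}(\overline{\epsilon},0)$, the monotone containment $\mathcal{C}(\epsilon_k,\mu_k)\supseteq\mathcal{C}(\overline{\epsilon},0)$ from the same proposition makes $(x',y',\lambda')$ feasible for $\mathsf{DBP}(\epsilon_k,\mu_k)$, so $F(x_k,y_k)\leq F(x',y')+z_k$. Continuity of $F$ and passing to the limit give $F(x^*,y^*)\leq F(x',y')$, so $(x^*,y^*,\lambda^*)\in\arg\min\mathsf{DBP}(\overline{\epsilon},0)$.

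Third, for the value convergence, the monotone containment gives $\min\mathsf{DBP}(\epsilon,\mu)\leq\min\mathsf{DBP}(\overline{\epsilon},0)$ and hence $z\text{-}\min\mathsf{DBP}(\epsilon,\mu)\leq\min\mathsf{DBP}(\overline{\epsilon},0)+z$, so $\limsup z\text{-}\min\mathsf{DBP}(\epsilon,\mu)\leq\min\mathsf{DBP}(\overline{\epsilon},0)$. The matching $\liminf$ bound comes from the argmin analysis: any $z_k$-approximate minimizer $(x_k,y_k,\lambda_k)$ satisfies $z_k\text{-}\min\mathsf{DBP}(\epsilon_k,\mu_k)\geq F(x_k,y_k)-z_k$, and along the convergent subsequence the right-hand side tends to $F(x^*,y^*)=\min\mathsf{DBP}(\overline{\epsilon},0)$. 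The main obstacle will be the uniform boundedness of $\lambda$, which is the nontrivial ingredient and relies on extracting a uniform Slater gap from the pointwise $\mathbf{R1}$; once this is in hand, the rest of the argument is a standard consistent-approximations step in the spirit of \cite{polak1997,royset2016}.
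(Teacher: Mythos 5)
Your proposal is correct, but it takes a genuinely different route from the paper. The paper's proof works in the epi-convergence framework of Rockafellar--Wets: it writes $\mathsf{DBP}(\epsilon,\mu)$ as $F(x,y)+\delta_{\mathcal{C}(\epsilon,\mu)}(x,y,\lambda)$, uses the monotone containment from Proposition \ref{prop:convcons} together with Proposition 7.4.d of \cite{rockafellar2009} to get epi-convergence to the limiting problem, verifies eventual level-boundedness (compactness of $X$ from $\mathbf{R2}$ for the $x$-component, and Lemma \ref{lemma:btsvm} applied to $\phi_\epsilon(x)=\{y:g(x,y)\leq\epsilon\}$ for the $y$-component), and then cites Theorem 7.33 of \cite{rockafellar2009} to obtain both conclusions at once. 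You instead carry out the consistency argument by hand: uniform compactness, subsequence extraction, feasibility of the limit via set convergence, and a competitor argument using the monotone containment. This buys elementariness and transparency, and in one respect your argument is more careful than the paper's: you explicitly bound the $\lambda$-component of the feasible set via a uniform Slater gap from $\mathbf{R1}$ over the compact $X$, which is exactly what makes the constraint $f(x,y)-h_\mu(\lambda,x)\leq\epsilon$ coercive in $\lambda$; the paper's level-boundedness step only addresses the $(x,y)$-components even though the objective is independent of $\lambda$, so your observation fills a point the paper leaves implicit. Two small cautions: (i) your justification that $g(x,y)\leq\epsilon$ keeps $y$ inside $Y$ ``for $\epsilon$ small enough'' is not quite right when $\overline{\epsilon}>0$; the correct argument (which the paper uses) is that for fixed $x$ the convex sublevel sets $\{y:g(x,y)\leq\epsilon\}$ all share the trivial recession cone because the level-$0$ set is nonempty and bounded (Corollary 8.7.1 of \cite{rockafellar1970}), and then Lemma \ref{lemma:btsvm} gives uniformity over $X$; (ii) in the value-convergence step, note that $z\text{-}\min$ is the value at a $z$-approximate minimizer, so your $\liminf$ bound should be read along the full sequence by arguing that every subsequence has a further subsequence converging to a minimizer, which your compactness step already provides.
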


\begin{proof}
Note $\mathsf{DBP}(\epsilon,\mu)$ is $f'_{\epsilon,\mu}(x,y,\lambda) = F(x,y) + \delta_{\mathcal{C}(\epsilon,\mu)}(x,y,\lambda)$.  Recall $\mathcal{C}(\epsilon,\mu)$ is closed since: $f,g,G$ are differentiable by \textbf{A1},\textbf{A3}; and $h_\mu$ is continuous by Proposition \ref{prop:hmulc2}.  By Proposition \ref{prop:convcons} we have $\mathcal{C}(\epsilon_1,\mu_1) \supseteq \mathcal{C}(\epsilon_2,\mu_2)$ when $\epsilon_1 \geq \epsilon_2$ and $\mu_1\geq\mu_2$, and so $f'_{\epsilon_1,\mu_1} \leq f'_{\epsilon_2,\mu_2}$ for $\epsilon_1 \geq \epsilon_2$ and $\mu_1\geq\mu_2$.  This means by Proposition 7.4.d of \cite{rockafellar2009} that $\mathsf{DBP}(\epsilon,\mu)$ epi-converges to $\mathsf{DBP}(\overline{\epsilon},0)$ as $\epsilon\downarrow\overline{\epsilon},\mu\downarrow0$.  Moreover, $\mathcal{C}(0,0)$ is nonempty by \textbf{R1} and Proposition \ref{prop:epssol}, which implies $\mathcal{C}(\epsilon,\mu)$ is nonempty.  So $f'_{\epsilon,\mu}(x,y,\lambda)$ is lower semicontinuous and feasible.  But $f'_{\epsilon,\mu}(x,y,\lambda) \geq f'_{\overline{\epsilon}+1,1}(x,y,\lambda)$ for $\epsilon \leq \overline{\epsilon}+1$ and $\mu \leq 1$ by Proposition \ref{prop:convcons}.  Moreover, $f'_{\epsilon,\mu}(x,y,\lambda) = F(x,y) + \delta_{\{(x,y) : (x,y,\lambda)\in\mathcal{C}(\epsilon,\mu)\}}(x,y) + \delta_{\mathcal{C}(\epsilon,\mu)}(x,y,\lambda)$.  Note $\{x : (x,y,\lambda) \in \mathcal{C}(\epsilon,\mu)\}$ is bounded by \textbf{R2}, and $\phi_\epsilon(x) = \{y : g(x,y) \leq \epsilon\}$ is: continuous by \textbf{A1},\textbf{R1} and Example 5.10 of \cite{rockafellar2009}; bounded for each $x\in X$ by \textbf{A1},\textbf{A2},\textbf{R1} and Corollary 8.7.1 of \cite{rockafellar1970}; and convex for each $x \in X$ by \textbf{A1}.  Hence applying Lemma \ref{lemma:btsvm} implies $\{y : (x,y,\lambda) \in \mathcal{C}(\epsilon,\mu)\}$ is bounded.  So $\{(x,y) : (x,y,\lambda)\in\mathcal{C}(\epsilon,\mu)\}$ is bounded, which by Example 1.11 of \cite{rockafellar2009} implies $\widetilde{f}_{\epsilon,\mu}(x,y,\lambda)$ is level bounded (see Definition 1.8 in \cite{rockafellar2009}) for all $\epsilon \leq \overline{\epsilon}+1$ and $\mu \leq 1$.  The result follows by Theorem 7.33 of \cite{rockafellar2009}.
\end{proof}

\section{Numerical Algorithm and Examples}

\label{section:numerics}


Previous sections provide theoretical justification for our Algorithm \ref{alg:dbpblp}, which uses \textsf{DBP} to solve \textsf{BLP}.  We conclude with two examples that demonstrate its effectiveness in solving practical problems.  The SNOPT solver \cite{gill2005snopt} was used for numerical optimization.  The first is a problem of inverse optimization with noisy data \cite{aswani2015,bertsimas2013,keshavarz2011}, and the second involves computing a Stackelberg strategy for routing games \cite{aswani2011,bonifaci2010,krichene2014,sharma2007,swamy2012}.

\begin{algorithm}[t]
\begin{algorithmic}[1] 
\caption{\textsf{DBP} Based Algorithm for Solving \textsf{BLP}}\label{alg:dbpblp}
\Require $x_0 \in X$; $\epsilon_0 > 0$; $\mu_0 \geq 0$; $\gamma,\zeta \in (0,1)$; $K \in \mathbb{Z}_+$
\For {$k = 1,\ldots,K-1$}
\State solve $\min_y \{f(x_k,y)\ |\ g(x_k,y) \leq 0\}$ using a convex optimization algorithm that provides a primal solution $y_k$ and the corresponding dual solution $\lambda_k$ 
\State solve \textsf{DBP}$(\epsilon_k, \mu_k)$ using a nonlinear optimization algorithm with the derivatives of \textsf{RDF} as in (\ref{eqn:defnab}), and with initial (feasible) point $(x_k, y_k, \lambda_k)$; set $x_{k+1}$ to be the computed minimizer in the $x$ variable
\State set $(\epsilon_{k+1},\mu_{k+1}) \leftarrow (\gamma\cdot\epsilon_k,\zeta\cdot\mu_k)$
\EndFor
\State return $x_K$
\end{algorithmic}
\end{algorithm}

\subsection{Inverse Optimization with Noisy Data}

Suppose an agent decides $y_i$ in response to a signal $u_i$ by maximizing a utility function $U(y, u, \theta_0)$, where $\theta_0$ is a vector of parameters.  Statistically consistent estimation of $\theta_0$ given $(u_i, z_i)$ for $i=1,\ldots,n$ data points, where $z_i$ are noisy measurements of $y_i$, requires solving \textsf{BLP} \cite{aswani2015}.  Heuristics using convex optimization (like \cite{bertsimas2013,keshavarz2011}) are inconsistent \cite{aswani2015}.

If $U(y, u, x) = -(x + u)y$ with $x,y,u\in\mathbb{R}$, then the bilevel program for statistical estimation is 
\begin{equation}
\label{eqn:blpest}
\begin{aligned}
\min_{x,y_i}\ & \textstyle\frac{1}{n}\sum_{i=1}^n\|z_i - y_i\|^2\\
\text{s.t. } & x \in [-1,1]\\
& y_i \in\arg\min_y \{(x+u_i)y\ |\ y \in [-1,1]\}, \forall i = 1,\ldots,n
\end{aligned}
\end{equation}
The reformulation \textsf{DBP}$(\epsilon,\mu)$ for this instance is given by
\begin{equation}
\begin{aligned}
\min_{x,y_i}\ & \textstyle\frac{1}{n}\sum_{i=1}^n\|z_i - y_i\|^2\\
\text{s.t. } & x \in [-1,1] \\
&(x+u_i)y_i - h_\mu(\lambda_i, x) - \epsilon \leq 0,\ \forall i = 1,\ldots,n\\
&y_i \in [-1-\epsilon,1+\epsilon],\ \lambda_i \geq 0,\ \forall i = 1,\ldots,n
\end{aligned}
\end{equation}
where $\lambda_i\in\mathbb{R}^2$, and the \textsf{RDF} is $\textstyle h_\mu(\lambda_i,x) = \min_y\{\mu\cdot y^2 + (x+u_i)\cdot y + \lambda_{i,1}\cdot(-y-1) + \lambda_{i,2}\cdot(y-1)\ |\ y\in[-2,2]\}$.  

Two hundred instances of (\ref{eqn:blpest}) with $n = 100$ were solved, where (a) $u_i$ and $\theta_0$ were drawn from a uniform distribution over $[-1,1]$, and (b) $z_i = \xi_i + w_i$ with $\xi_i \in \arg\min_y \{(\theta_0+u_i)y\ |\ y \in [-1,1]\}$ and $w_i$ drawn from a standard normal.  Each instance was solved by Algorithm \ref{alg:dbpblp}, where: $x_0$ was drawn from a uniform distribution over $[-1,1]$, $\epsilon_0 = 1$, $\mu_0 = 10^{-4}$, $\gamma=0.1$, $\zeta=1$, and $K=3$.  We use $\hat{\theta}$ to refer to the value returned by Algorithm \ref{alg:dbpblp}, to emphasize that the returned value is an estimate of $\theta_0$.  Fig. \ref{fig:iopt} has scatter plots of the 200 solved instances; it shows (left) the initial (randomly chosen) $x_0$ are uncorrelated to the true $\theta_0$, and (right) the estimates $\hat{\theta}$ computed using our algorithm are close to the true $\theta_0$.

\begin{figure}
\centering
 \subfloat[Initialized]{\includegraphics[trim=0.17in 0 0.23in 0, clip]{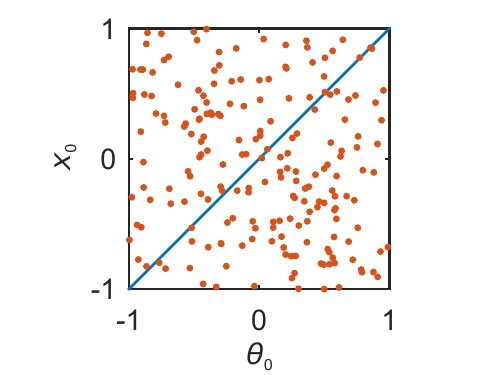}}\hspace{0cm}
 \subfloat[Estimated]{\includegraphics[trim=0.17in 0 0.23in 0, clip]{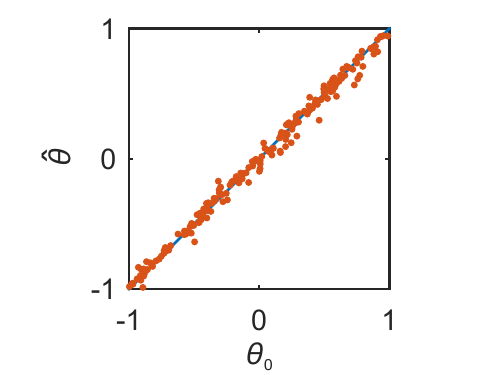}}
\caption{Scatter plot of initialized $x_0$ versus true parameter $\theta_0$ (Left).  Scatter plot of estimated (by Algorithm \ref{alg:dbpblp}) $\hat{\theta}$ versus true parameters $\theta_0$ (Right).}
\label{fig:iopt}
\end{figure}

\subsection{Stackelberg Routing Games}

A common class of routing games consists of a directed graph with multiple edges between vertices, convex delay functions for each edge, and a listing of inflows and outflows of traffic \cite{aswani2011,bonifaci2010,krichene2014,sharma2007,swamy2012}.  The Stackelberg strategy is a situation where a leader controls an $\alpha$ fraction of the flow, the remaining flow is routed according to a Nash equilibrium given the flow of the leader, and the leader routes their flow to minimize the average delay in the network.  This problem is a bilevel program with a convex lower level.

An example of a two edge network in this Stackelberg setting is shown below:\\

{\centering
\includegraphics[width=2in,trim=3.8in 2in 3.8in 2.5in, clip]{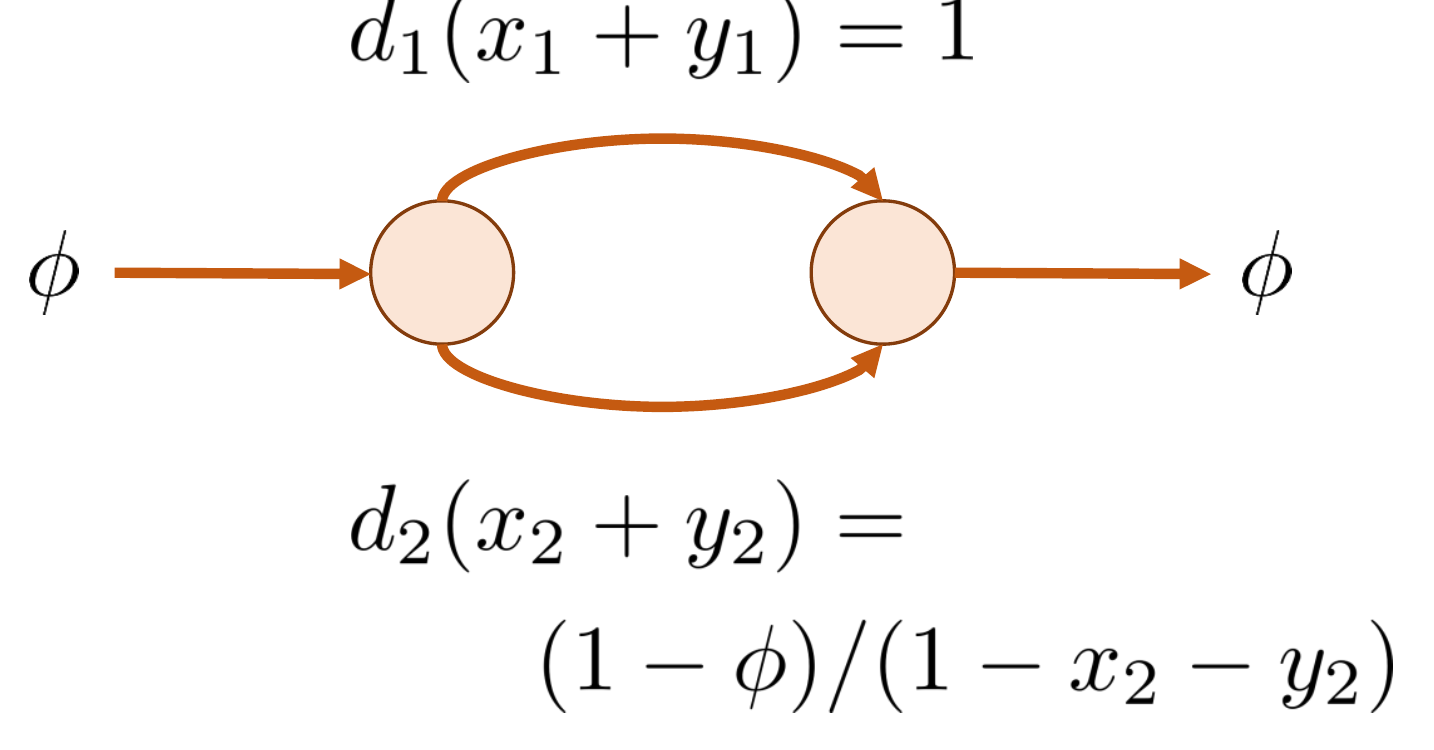}\\
}
\noindent The Stackelberg strategy for this two edge network is the solution to 
\begin{equation}
\label{eqn:ssblp}
\begin{aligned}
\min_{x,y}\ & x_1 + y_1 + (1-\phi)\cdot(x_2 + y_2)/(1 - x_2 - y_2)\\
\text{s.t. } & x_1 + x_2 = \alpha\cdot\phi,\quad x\geq 0\\
&y \in \arg\min_y \big\{x_1 + y_1 - (1-\phi)\cdot\log(1-x_2-y_2)\\
&\hspace{2cm}\ \big|\ y_1+y_2=(1-\alpha)\cdot\phi,\ y\geq 0\big\}
\end{aligned}
\end{equation}
where (a) $x_1,x_2$ is the leader's flow on the top/bottom edge, (b) $y_1,y_2$ is the follower's flow on the top/bottom edge, (c) $\phi < 1$ is the amount of flow entering the network, and (d) $\alpha$ is the fraction of the flow controlled by the leader.  The duality-based reformulation \textsf{DBP}$(\epsilon,\mu)$ for this instance is
\begin{equation}
\begin{aligned}
\min_{x,y}\ & x_1 + y_1 + (1-\phi)\cdot(x_2 + y_2)/(1 - x_2 - y_2)\\
\text{s.t. } & x_1 + x_2 = \alpha\cdot\phi,\quad x,y,\lambda \geq 0 \\
& x_1 + y_1 - (1-\phi)\cdot\log(1-x_2-y_2) +\\
&\hspace{3.5cm}- h_\mu(\lambda,\nu,x) - \epsilon \leq 0\\
& y_1+y_2 \in [(1-\alpha)\cdot\phi - \epsilon, (1-\alpha)\cdot\phi + \epsilon]
\end{aligned}
\end{equation}
where $\lambda\in\mathbb{R}^2$, $\nu\in\mathbb{R}$, and the \textsf{RDF} is $h_\mu(\lambda,\nu,x) = \min_y\{\mu\cdot \|y\|^2 + x_1 + y_1 - (1-\phi)\cdot\log(1-x_2-y_2) -\lambda_1\cdot y_1 - \lambda_2\cdot y_2 + \nu\cdot(y_1+y_2-(1-\alpha)\cdot\phi)\ |\ y\in[-1,2]\}$.  Different instances (corresponding to different values of $\alpha,\phi$) were solved by Algorithm \ref{alg:dbpblp}, where: $\epsilon_0 = 1$, $\mu_0 = 10^{-4}$, $\gamma=0.1$, $\zeta=1$, and $K=3$.  The initial point provided to the algorithm was the \textsf{SCALE} strategy \cite{aswani2011,bonifaci2010,swamy2012}, which corresponds to computing $x'\in\arg\min_x \{x_1 + (1-\phi)\cdot(x_2)/(1 - x_2)\ |\ x_1 + x_2 = \phi,\ x\geq 0\}$ and then choosing $\alpha x'$ as the initial point.

Solution quality is evaluated by the price of anarchy (\textsf{PoA}) \cite{roughgarden2003}, which is the average delay of a solution divided by the average delay when $\alpha = 1$.  The objective in (\ref{eqn:ssblp}) gives the average delay.  A \textsf{PoA} close to 1 is ideal because it implies the delay of the strategy is close to the delay when the leader controls the entire flow, while a large \textsf{PoA} means the average delay of the strategy is much higher than when the leader controls the entire flow.  The results in Fig. \ref{fig:stack} show that our duality-based approach (initialized with \textsf{SCALE}) significantly improves the quality of the Stackelberg strategy.

\begin{figure}
\centering
 \subfloat[\textsf{SCALE} Strategy]{\includegraphics[scale=0.85]{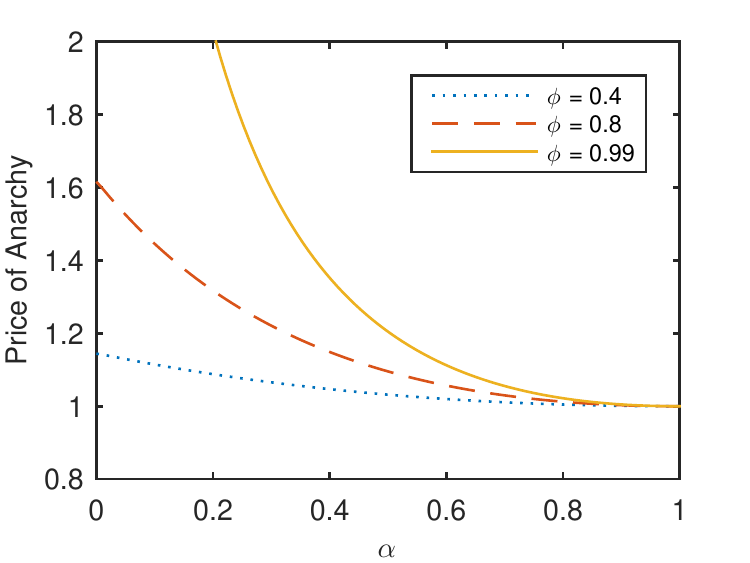}}\\
 \subfloat[Duality-Based Strategy]{\includegraphics[scale=0.85]{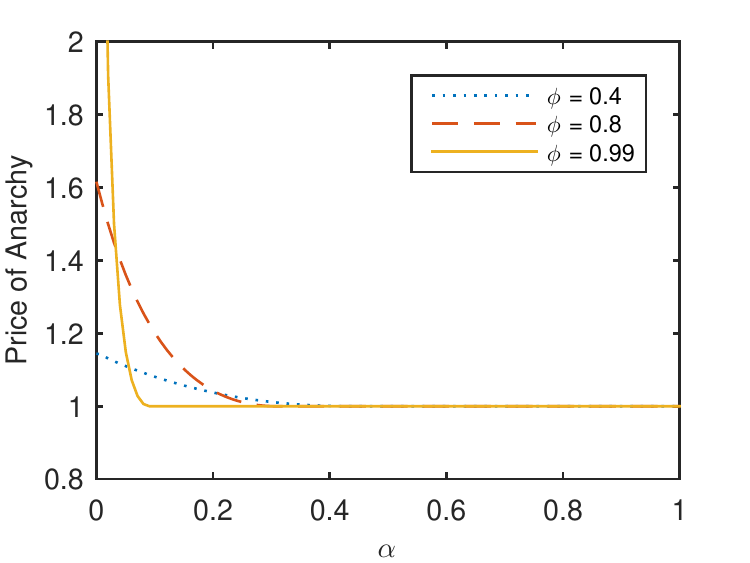}}
\caption{Comparison of Stackelberg strategy quality for $\mathsf{SCALE}$ strategy (Top), which was used as an initialization to compute the duality-based strategy (Bottom) using our Algorithm \ref{alg:dbpblp}.}
\label{fig:stack}
\end{figure}

\section{Conclusion}

We used a new (differentiable) dual function to construct a duality-based reformulation of bilevel programs with a convex lower level, and this reformulation uses regularization to ensure constraint qualification and differentiability.  We proved results about the properties of this reformulation as justification for a new algorithm to solve bilevel programs, and then we displayed the effectiveness of our algorithm by solving two practical instances of bilevel programming.






\bibliographystyle{IEEEtran}
\bibliography{IEEEabrv,dual}

\end{document}